\theoremstyle{plain}
\newtheorem{mainthm}{Theorem}
\newtheorem{thm}{Theorem}[section]
\newtheorem{lem}[thm]{Lemma}
\newtheorem{prop}[thm]{Proposition}
\newtheorem{defi}[thm]{Definition}
\theoremstyle{definition}
\newtheorem{exap}[thm]{Example}
\newtheorem{rem}[thm]{Remark}
\newtheorem{question}{Question}
\newcommand{\eqdef}{\stackrel{\scriptscriptstyle\rm def}{=}}
\DeclareMathOperator{\spn}{span} 
\DeclareMathOperator{\ind}{ind}
\begin{document}

\title{Robust heteroclinic tangencies}
\author[Barrientos]{Pablo G.~Barrientos}
\address{\centerline{Instituto de Matem\'atica e Estat\'istica, UFF}
   \centerline{Rua Prof. Marcos Waldemar de Freitas Reis, s/n, Niter\'oi, Brazil}}
\email{pgbarrientos@id.uff.br}
\author[P\'erez]{Sebasti\'an A.~P\'erez}
\address{\centerline{Centro de Matematica da Universidade do Porto}
   \centerline{Rua do Campo
Alegre 687, 4169-007 Porto, Portugal}} \email{sebastian.opazo@fc.up.pt}

\begin{abstract}
We construct diffeomorphisms in dimension $d\geq 2$ exhibiting
$C^1$-robust heteroclinic tangencies.
\end{abstract}

~\vspace{-0.75cm}
\keywords{folding manifolds, robust equidimensional tangencies,
robust heterodimensional tangencies.}\\
 \subjclass[2000]{Primary 34D30, 37D10, 37D30, 37G25.}

\maketitle 
\thispagestyle{empty}

\section{Introduction}
An important problem in the modern theory of Dynamical Systems is
to describe  diffeomorphisms whose qualitative behavior exhibits
robustness under (small) perturbations and how abundant these sets
of dynamics can be. Motivated by this issue, Smale introduced
in~\cite{Sm67}  the hyperbolic diffeomorphisms as examples of
structural stable dynamics (open sets of dynamics which are all of
them conjugated). However, the transverse intersection between the
invariant manifolds of  basic sets was soon observed as a
necessary condition~\cite{Will70,Palis78,M88}.
The main goal of this article is to study the persistence of the
non-transverse intersection between those  manifolds. Namely, we
focus in tangencial heteroclinic orbits.

A diffeomorphism $f$ of a manifold $\mathcal M$  has a
\emph{heteroclinic tangency}  if there are  different tran\-sitive
hyperbolic sets $\Lambda$ and $\Gamma$, points $P\in \Lambda$,
$Q\in \Gamma$ and $Y\in W^u(P)\cap W^s(Q)$ such that
\[
     c_T \eqdef \dim \mathcal{M} - \dim [T_Y W^u(P) + T_Y W^s(Q)] > 0\quad \text{and} \quad
     d_T\eqdef\dim T_Y W^u(P)\cap T_Y W^s(Q)>0.
\]
The number $c_T$ is called \emph{codimension of the tangency} and
measures how far the tangencial intersection is from a transverse
intersection. On the other hand, $d_T$ indicates the number of
linearly independent common tangencial directions. Observe that
\[
   c_T=d_T- k_T   \qquad \text{with} \quad k_T=\ind(\Lambda)-\ind(\Gamma)
\]
where $\ind(\Sigma)$ denotes the stable index of a (transitive)
hyperbolic set $\Sigma$. The integer $k_T$ is called \emph{signed
co-index}. Notice that when $k_T>0$ this number coincides with the
classical co-index between $\Lambda$ and $\Gamma$. Moreover,
$k_T>0$ if and only if
\[
  \dim T_Y W^u(P) + \dim T_Y W^s(Q) > \dim M.
\]
If $k_T=0$, the heteroclinic tangency is called
\emph{equidimensional} and otherwise \emph{heterodimensional}.
Figure~\ref{fig:001} illustrates the different types of
heteroclinic tangencies in dimension three.

\begin{figure}
\centering
\begin{overpic}[scale=0.27,
]{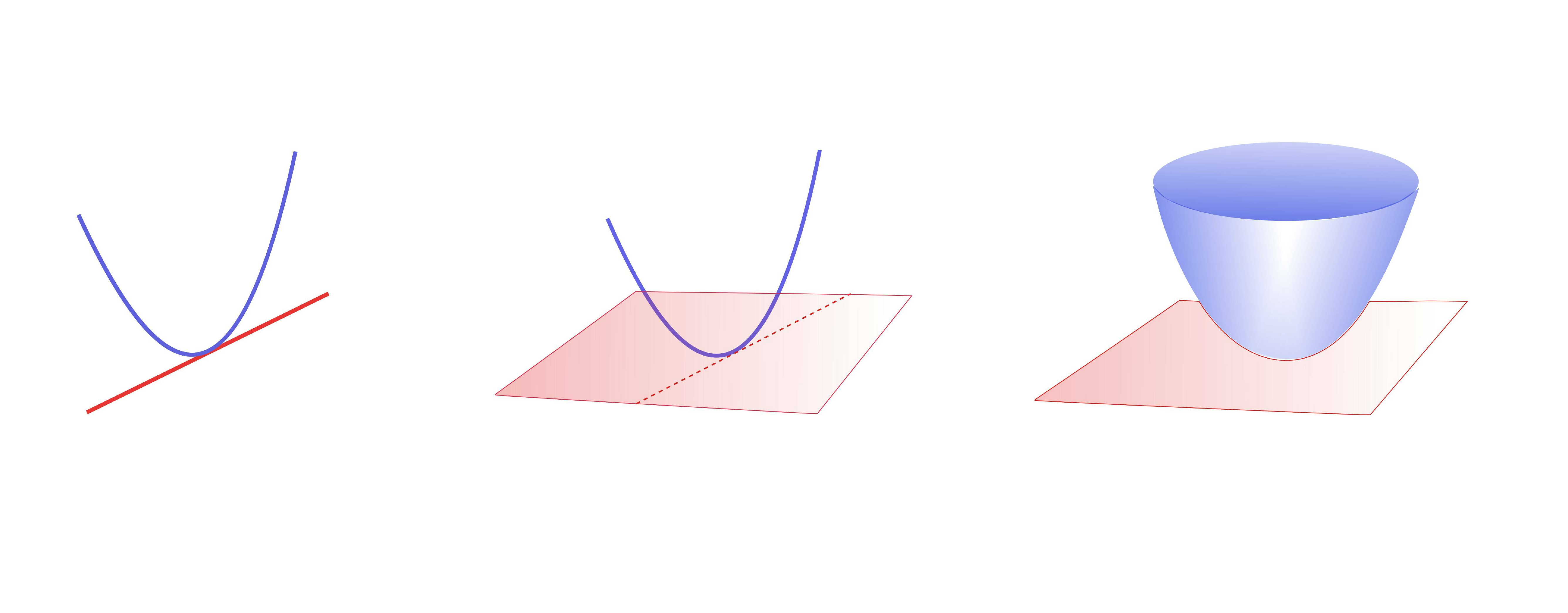} \scriptsize
         \put(-8,62){\large $W^s(Q)$}
            \put(54,68){\large \circle*{3}}
              \put(57,56){\large $Y$}
                   \put(75,130){\large $W^u(P)$}
\put(-2,15){\normalsize(a) heterodimensional} \put(145,15){\normalsize(b)
equidimensional} \put(282,15){\normalsize(c) heterodimensional}
          \put(135,63){\large $W^s(Q)$}
            \put(193,67){\large \circle*{3}}
              \put(199,56){\large $Y$}
                   \put(215,130){\large $W^u(P)$}
    \put(285,63){\large $W^s(Q)$}
            \put(341,67){\large \circle*{3}}
              \put(348,56){\large $Y$}
                   \put(360,130){\large $W^u(P)$}
\label{geometry}
 \end{overpic}
\caption{Heteroclinic tangencies in dimension 3: (a) $c_T=2$,
$d_T=1$ and $k_T=-1$; (b)  $c_T=1$, $d_T=1$ and $k_T=0$; (c)
$c_T=1$, $d_T=2$ and $k_T=1$.} \label{fig:001}
\end{figure}



Heterodimensional tangencies with signed co-index $k_T>0$ was
introduced in~\cite{DNP06} where interesting dynamics consequences
were obtained. Indeed, the authors showed that the $C^1$-unfolding
of a three dimensional heterodimensional tangency (with $k_T=1$)
leads to $C^1$-robustly non-dominated dynamics and
 in some cases to very intermingled dynamics related to universal dynamics,
 for details
see~\cite{DNP06,BonDia03}. In the $C^r$-topologies with $r>1$, the
bifurcation of such tangencies leads, for instance, to the
existence of blender dynamics~\cite{DiaKirShi:14,DiaPer:18}.
Kiriki and Soma in~\cite{KS12} obtain the first examples of
$C^2$-robust heterodimensional tangencies with $c_T=1$ and
$k_T=d-2$ in any manifold of dimension $d\geq 3$. Recently
in~\cite{BR17}  new examples of $C^2$-robust heterodimensional
tangencies with $0<c_T\leq \lfloor (d-3)/2 \rfloor$ and $1\leq k_T
\leq d-2-2c_T$ were also constructed in any manifold of dimension
$d\geq 5$. In the same work, $C^2$-robust heteroclinic tangencies
with $k\leq 0$ were also obtained. In~\cite{KS12} it was proposed
the problem of constructing $C^1$-robust heterodimensional
tangencies with $k>0$ in any dimension greater than $2$.
Motivated by this issue, the main result of this work shows that,
in particular, these tangencies can be built persistently under $C^1$-perturbations.

\begin{mainthm}\label{t.MT}
Every manifold of dimension $d\geq 2$ admits a diffeomorphism $f$
having a $C^1$-robust heteroclinic tangency of codimension $c_T=1$
and signed co-index $0\leq k \leq d-2$.
\end{mainthm}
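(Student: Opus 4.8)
The plan is to produce the example locally and then transport it. A $C^1$-robust heteroclinic tangency only involves the dynamics near the two hyperbolic sets and the heteroclinic connection, so it is enough to construct a diffeomorphism of a cube in $\R^d$ and to embed it into an arbitrary $d$-manifold through a chart, extending by the identity. To organize the dependence on $d$ and $k$ I would first build a \emph{core} model in $\R^{k+2}$ realizing a codimension-one tangency between a $(k+1)$-dimensional unstable manifold and a $(k+1)$-dimensional stable manifold, and then reach dimension $d$ by taking the product with a fixed linear hyperbolic map $g$ on $\R^{d-k-2}$. Since $g$ enlarges the unstable and stable bundles of \emph{both} hyperbolic sets by its own complementary unstable and stable subspaces, a short bookkeeping shows that the product keeps $c_T=1$, raises the common tangential dimension to $d_T=k+1$, and leaves the signed co-index equal to $k$; thus the whole problem reduces to the core model.

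The core of the construction is a \emph{folding manifold}. I would realize the two hyperbolic sets $\Lambda$ and $\Gamma$ as linear horseshoes, maximal invariant sets of affine $\IFS$s on the cube, with $\dim W^u(\Lambda)=\dim W^s(\Gamma)=k+1$ and the complementary stable and unstable dimensions equal to $1$, so that $\Lambda$ and $\Gamma$ differ in index by $k$ and the tangency will have signed co-index $k$. The decisive feature is to force a piece of $W^u(\Lambda)$ to \emph{fold}: along the single codimension direction its tangent plane must turn around, sweeping \emph{transversally} across the direction spanned by $W^s(\Gamma)$, while remaining flat in the other $k$ directions. Consequently the set of tangent directions realized by the folded manifold contains an open interval, so the matching direction is attained not just once but robustly. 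The second, and essential, ingredient is to make the \emph{positions} of these folds cover an interval as well: choosing the contraction ratios of the $\IFS$ so that the images of a reference family of folded disks overlap yields a superposition (blender-type) condition, now stated in the $1$-jet bundle of positions-and-tangent-directions, and the self-similarity of the horseshoe then spreads folds of the prescribed direction over a whole interval of positions.

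Granting the folding manifold, the tangency is obtained by positioning the heteroclinic point $Y$ so that the $1$-jet of $W^s(Q)$ falls inside the region covered by the folds of $W^u(P)$. At the corresponding fold the two tangent planes coincide along the folding direction and along the $k$ flat directions, giving $d_T=k+1$ and $c_T=1$, while the index choice fixes the signed co-index $k$. Robustness is then read off from the covering: under a $C^1$-perturbation the folded disks move only $C^0$-ly in the $1$-jet bundle, the turning point persisting by an intermediate-value argument applied to the tangent direction (which varies continuously in the $C^1$ topology); hence the superposition region survives and still captures the perturbed $1$-jet of $W^s(Q)$, producing a nearby tangency.

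The hard part is exactly the $C^1$-robustness of this superposition. A tangency is a second-order object, yet $C^1$-perturbations control only first derivatives, and the familiar $C^2$ arguments that keep the folds uniformly quadratic rely on bounded distortion, which is not available here. I therefore expect the main work to lie in verifying that both the folding and the covering are genuinely first-order, \emph{open} conditions: the fold must be a transverse crossing of the tangent direction, so that it is located by continuity and survives $C^1$-perturbations, and the induced $\IFS$ on the jet bundle must satisfy a strict, cone-based covering inequality that is stable under $C^1$-small changes of its affine generators. Ensuring that these two requirements can be met simultaneously, and that they are inherited by the product model used to prescribe $d$ and $k$, is the crux of the proof.
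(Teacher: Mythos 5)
Your localization and product bookkeeping are fine and run parallel to the paper's own construction ($f=g\times h$ with $g$ a linear contraction), and your ``fold'' is essentially the paper's folding manifold; that part is indeed $C^1$-robust, since compact pieces of invariant manifolds vary $C^1$-continuously with the diffeomorphism, so their lifts to the $1$-jet bundle vary $C^0$-continuously and a transverse crossing of directions persists. The genuine gap is your robustness mechanism for the \emph{positions}: a blender-type superposition condition for the induced IFS on the $1$-jet bundle is not a $C^1$-open condition on $f$. If $g$ is $C^1$-close to $f$, the induced map $g^s(x,E)=(g(x),Dg(x)E)$ on $\mathbb{R}^d\times G(s,d)$ is only $C^0$-close to $f^s$, and when $g$ is merely $C^1$ it need not even be differentiable. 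Invariant cone fields, expansion estimates and cone-based covering inequalities in the jet bundle are conditions on the \emph{derivative} of the jet map, i.e.\ on the second derivative of $f$: they are open in the $C^2$ topology on $f$, not in the $C^1$ topology. This is precisely the mechanism of the earlier constructions of Kiriki--Soma and Barrientos--Raibekas cited in the paper, and precisely why those constructions only achieve $C^2$-robustness. The ``hard part'' you defer at the end --- making the covering a first-order open condition stable under $C^1$-small changes of the generators --- is not a technical verification but the fundamental obstruction, and it is not overcome by observing that the lifted leaves move only $C^0$-ly: $C^0$-proximity of a Cantor lamination of lifted leaves does not preserve any covering or superposition property once the self-similar structure generating it is no longer hyperbolic for the perturbed jet dynamics.

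The idea you are missing, and the reason the paper gets $C^1$ rather than $C^2$, is to replace one of the two horseshoes by a hyperbolic \emph{attractor} (a Plykin attractor times a contraction). Its $(d-1)$-dimensional stable manifolds foliate an entire trapping region $D_\varepsilon$, and by structural stability this foliation, together with its tangent bundle $E^s_g$, persists and varies continuously under $C^1$-perturbation. The tangency between $W^u(Q)$ (with $Q$ a saddle of stable index $d-1-k$) and this foliation is then made robust by arguments using only $C^0$-data: in Section~2, an elliptic tangency plus an infimum/intermediate-value argument over the one-parameter family of leaves $\mathcal{F}_g(t)$ crossed by a disc of $W^u(Q_g)$; in Section~3, topological transversality in $\mathbb{R}^d\times G(s,d)$ between the lifted folding disc (of dimension $\dim G(s,d)$) and the lifted foliation $D_\varepsilon\ltimes E^s_g$, which is a genuine submanifold of complementary codimension, so the intersection survives any $C^0$-perturbation by a degree argument. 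With two horseshoes, as in your proposal, the lifted stable set is only a lamination over a Cantor set: it does not separate the jet bundle and supports no degree argument, which forces the blender route and, with it, the $C^2$ barrier. This is also why the paper pairs an attractor with a single saddle, and why (as noted in its Section~5) the construction cannot be placed inside a heterodimensional cycle.
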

By constraints of the dimension, in surfaces, we only get
equidimensional tangencies. In higher dimensions, we construct
both type of heteroclinic tangencies: equidimensional and
heterodimensional with all possible signed co-index between 0 and
$d-1$.

Theorem~\ref{t.MT} will be proved in Section~\ref{sec2}, by
providing a local construction close to the classical examples
given by Abraham and Smale~\cite{AS70}, Simon~\cite{Si72} and
Asaoka in \cite{Ao08}. In Section~\ref{sec3} we will give a
different proof of Theorem~\ref{t.MT} using ideas of the recent
work~\cite{BR19} studying the differential cocyle in the tangent
space. These new ideas allow us to generalize the construction for
large codimension ($c_T\geq 2$) in some particular cases. Namely,
we get the following result.
\begin{mainthm} \label{thmB}
Given integers $c_T \geq 1$ and $s> c_T$, there are
diffeomorphisms $f$ of the $d$-dimensional torus $\mathbb{T}^d$
with $d=c_T \cdot (s+1)$ having a $C^1$-robust heterodimensional
tangency of codimension $c_T$ and signed co-index $k_T=s-c_T>0$.
\end{mainthm}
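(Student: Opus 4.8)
The plan is to build $f$ as a product of the codimension-one models supplied by the differential-cocycle construction of Section~\ref{sec3}, which is possible precisely because the ambient dimension factors as $d=c_T\cdot(s+1)$. Identifying $\mathbb{T}^d=(\mathbb{T}^{s+1})^{c_T}$, I would take $f=f_1\times\cdots\times f_{c_T}$, where each $f_i$ is a diffeomorphism of $\mathbb{T}^{s+1}$ exhibiting a $C^1$-robust heteroclinic tangency of codimension $1$ and signed co-index $k^{(i)}$, with hyperbolic sets $\Lambda_i,\Gamma_i$ and heteroclinic point $Y_i\in W^u(P_i)\cap W^s(Q_i)$. The non-negative integers $k^{(i)}$ are chosen with $\sum_i k^{(i)}=s-c_T$; since $s-c_T\leq s-1=(s+1)-2$, each $k^{(i)}$ is admissible in dimension $s+1$ by Theorem~\ref{t.MT} (for instance $k^{(1)}=s-c_T$ and $k^{(i)}=0$ for $i\geq 2$). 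The candidate hyperbolic sets and heteroclinic orbit for $f$ are then the products $\Lambda=\prod_i\Lambda_i$, $\Gamma=\prod_i\Gamma_i$, $P=(P_1,\dots,P_{c_T})$, $Q=(Q_1,\dots,Q_{c_T})$ and $Y=(Y_1,\dots,Y_{c_T})$.

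Next I would carry out the dimension bookkeeping. Writing $(c^{(i)},d^{(i)},k^{(i)})$ for the codimension, number of tangential directions and signed co-index of the $i$-th block, one has $c^{(i)}=1$ and $d^{(i)}=c^{(i)}+k^{(i)}=1+k^{(i)}$. Because the tangent space of a product splits as a direct sum, $T_YW^u(P)=\bigoplus_i T_{Y_i}W^u(P_i)$ and $T_YW^s(Q)=\bigoplus_i T_{Y_i}W^s(Q_i)$, so all three invariants add over the factors. Hence $f$ has codimension $\sum_i c^{(i)}=c_T$, signed co-index $\sum_i k^{(i)}=s-c_T$, and
\[
d_T=\sum_{i=1}^{c_T}d^{(i)}=\sum_{i=1}^{c_T}\big(1+k^{(i)}\big)=c_T+(s-c_T)=s.
\]
Thus $f$ exhibits a heterodimensional tangency of codimension $c_T$, with $d_T=s$ tangential directions and signed co-index $k_T=s-c_T>0$, as required.

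The heart of the matter, and the reason for using the cocycle formulation of Section~\ref{sec3} rather than the geometric model of Section~\ref{sec2}, is the $C^1$-robustness. In each factor the robustness is produced by a blender for the differential cocycle: an iterated function system on the relevant Grassmannian of tangent directions enjoying a $C^1$-open superposition (covering) property that robustly forces the stable bundle of $\Gamma_i$ to meet the unstable bundle of $\Lambda_i$ in the prescribed tangential position. I would assemble the product object $\mathcal{B}=\mathcal{B}_1\times\cdots\times\mathcal{B}_{c_T}$ acting on the product Grassmannian bundle over $\Lambda$ and show that its covering property survives every $C^1$-perturbation $g$ of $f$, which together with the $C^1$-persistence of the hyperbolic continuations $\Lambda_g,\Gamma_g$ yields the robust heteroclinic tangency. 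The main obstacle is exactly this last step: a generic $C^1$-perturbation does not preserve the product splitting, so one cannot argue factor by factor and must instead verify that $\mathcal{B}$ is a single blender whose covering property is $C^1$-open in the full space of diffeomorphisms. It is precisely the need for this product covering property that confines the argument to the factored dimensions $d=c_T\cdot(s+1)$, i.e.\ to the ``particular cases'' of the statement.
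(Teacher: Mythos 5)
Your product bookkeeping is correct as far as it goes: for the unperturbed product map the invariants add over the factors, and $c_T=\sum_i c^{(i)}$, $k_T=\sum_i k^{(i)}=s-c_T$, $d_T=\sum_i(1+k^{(i)})=s$ do produce a heterodimensional tangency with the stated data. But the entire content of Theorem~\ref{thmB} is the $C^1$-robustness, and that is exactly the step you defer (``show that its covering property survives every $C^1$-perturbation'') and never carry out. Moreover, it cannot be carried out along the lines you sketch. First, the mechanism of Section~\ref{sec3} is not a blender or an IFS with a superposition property: it is topological transversality in the bundle $\mathbb{R}^d\times G(s,d)$ between two objects of \emph{complementary} dimension, namely the tangent lift $\mathcal{S}^s$ of a folding manifold, which by Lemma~\ref{lem1} (resp.\ Lemma~\ref{lem2}) is a disc of dimension $\dim G(s,d)$ graphed over an open set of $s$-planes, and $W^s_{loc}(\Lambda^s)$, which has codimension $\dim G(s,d)$. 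That complementarity is what makes the intersection survive, even though the lifted cocycle $g^s$ is only $C^0$-close to $f^s$.

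Now run the same count for your product. Your attractor $\Lambda=\prod_i\Lambda_i$ has stable leaves of dimension $c_T\cdot s$, while $\mathcal{S}\subset W^u(Q)$ has dimension $s$; a tangency of codimension $c_T$ at $x$ forces $T_x\mathcal{S}\subset E^s_g(x)$, which is $s\cdot c_T$ scalar conditions (each of $s$ tangent directions must lie in a plane of codimension $c_T$) imposed on only $s$ parameters $x\in\mathcal{S}$. Equivalently, in the Grassmannian bundle the lift $\{(x,T_x\mathcal{S}):x\in\mathcal{S}\}$ has dimension $s$, the incidence set $\{(x,E):\dim E=s,\ E\leq E^s_g(x)\}$ has dimension $d+s^2(c_T-1)$, and
\[
\bigl[s\bigr]+\bigl[d+s^2(c_T-1)\bigr]=\dim\bigl(\mathbb{R}^d\times G(s,d)\bigr)-s(c_T-1),
\]
a deficit of $s(c_T-1)>0$ whenever $c_T\geq 2$. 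Two objects whose dimensions sum to less than the ambient dimension admit no degree or transversality argument protecting their intersection; the product tangency exists only because of the non-generic product alignment, and a $C^1$-small perturbation tilting $E^s_g$ along $\mathcal{S}$ (perturbations do not respect the splitting, as you yourself note) leaves the over-determined system without solutions. This is why the paper does something genuinely different: a \emph{single} attractor with $s$-dimensional stable leaves (a contraction on $[-\varepsilon,\varepsilon]^{s-1}$ times a codimension-one expanding DA-attractor on $\mathbb{T}^{d-s+1}$), and a \emph{single} folding manifold of dimension $k=d-c_T=c_T\cdot s$, given in~\eqref{eq3}, whose tangent $k$-planes contain \emph{every} $s$-plane near $E^s$; then the lift is a disc of full dimension $\dim G(s,d)$ and Proposition~\ref{pro2} applies. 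The constraint $d=c_T(s+1)$ arises because the covering system has $c_T\cdot s$ equations in $k=d-c_T$ unknowns and must be square; it has nothing to do with factoring $\mathbb{T}^d$ as $(\mathbb{T}^{s+1})^{c_T}$.
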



The proof of the above theorem will be carried on in
Section~\ref{sec4}. Finally, in Section~\ref{sec5}  we conclude the work with a section of
open questions and future directions.
\section{Geometric construction of $C^1$-robust heteroclinic tangencies}
\label{sec2} Let $\Lambda$ be a Plykin attractor in a disc with
three holes~\cite{Rob98}. Let $Q$ be a saddle in the complement of
this disc as in Figure~\ref{fig:01}. To do possible the
construction we need to assume that $Q$ belongs to a Plykin
repellor $\Gamma$. This figure illustrates the two-dimensional
version of the Asaoka's argument~\cite{Ao08} (see
also~\cite{Si72}) providing a $C^1$-robust equidimensional
tangency in any surface between the stable manifold of $\Lambda$
and the unstable manifold of $Q$.
\begin{figure}
\centering
\begin{overpic}[scale=0.22,
]{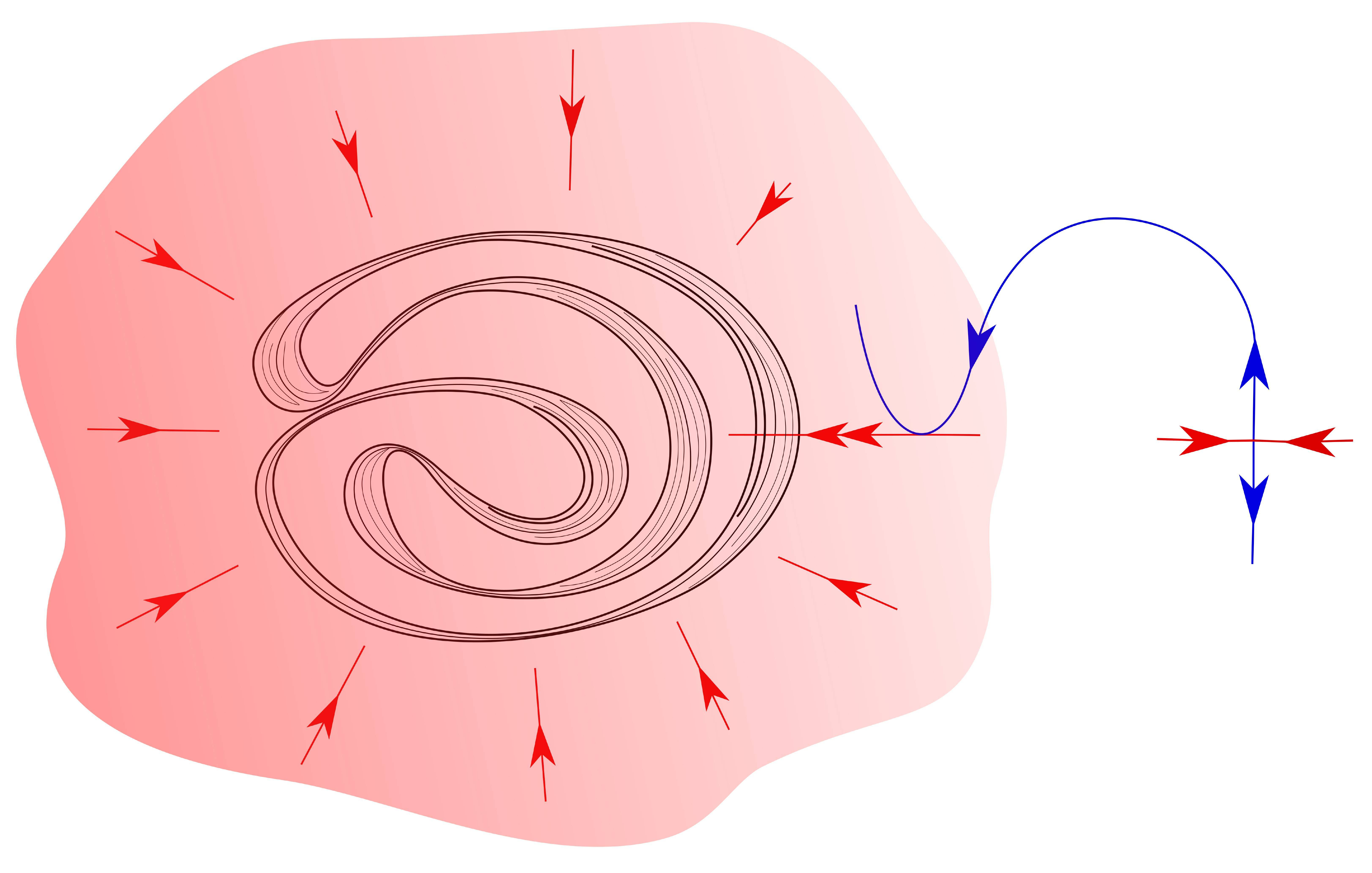} \scriptsize
            \put(65,158){\huge $\Lambda$}
    \put(201,110){\circle*{4}}
  \put(200,88){\large $P$}
   \put(232,110){\large \circle*{3}}
              \put(228,90){\large $y$}
\put(315,108){\circle*{4}}
  \put(324,88){\large $Q$}
\label{geometry}
 \end{overpic}
\caption{ Equidimensional tangency in dimension $d=2$}
\label{fig:01}
\end{figure}

Using this idea, we built a diffeomorphism $f$  on any manifold
$\mathcal{M}$ of dimension $d\geq 2$ having a hyperbolic attractor
$\Lambda$  whose attracting region is a connected set and foliated
by $(d-1)$-dimensional stable submanifolds. After that, we
consider a   fixed point $P$  in $\Lambda$ and another
  fixed point $Q$ of $f$ of stable index $d-1-k$ where $0\leq
  k\leq d-2$ creating a heteroclinic tangency between $W^s(P)$ and $W^u(Q)$, so that
$W^u(Q)$ and $W^s(\Lambda)$ meet transversely. The  $C^1$-
persistence of this last intersection provides a $C^1$-robust
heteroclinic tangency associated with $\Lambda$ and $Q$.

\subsection{Construction} We now give the details of our construction. Since our
argument is local, we can put $\mathcal M=\mathbb{R}^d$ with $d
\geq 2$. First, we take a two-dimensional diffeomorphism $h$ with
a Plykin attractor $\Sigma$ constructed in local coordinates
inside  a disk
$D\subset \mathbb{R}^2$ with three holes. 
We consider a $C^r$-diffeomorphism $f:\mathcal M \to \mathcal M$
with $r\geq 1$ such that for a small $\varepsilon>0$, the
restriction of $f$ to the set $D_\varepsilon\eqdef
[-\varepsilon,\varepsilon]^{d-2}\times D$ is given by
\begin{equation}
\label{e.f}
  f=g\times h \quad \text{where} \quad g(t)=\lambda t \ \
  \text{for} \ \
  t\in [-\varepsilon,\varepsilon]^{d-2} \ \ \text{and} \ \
  0<\lambda<1.
\end{equation}
Thus, the set
\begin{equation}
\label{e.a} \Lambda\eqdef \{0^{d-2}\}\times
\Sigma=\bigcap_{n\geqslant 1}f^n(D_{\varepsilon})
\end{equation}
 is a hyperbolic
attractor of $f$ and  $D_\varepsilon$  is a trapping region of
$f$, i.e. $f(D_\varepsilon)\subset
\mathrm{interior}(D_\varepsilon)$. The structural stability of
$\Lambda$ provides the existence of a $C^1$-neighborhood
$\mathcal{V}$ of $f$ such that for each $g\in \mathcal{V}$, the
continuation $\Lambda_g$ of $\Lambda$ has by trapping region the
set $D_\varepsilon$.
We remark 
that the local stable manifolds $W^s_{loc}(x) =W^s(x)\cap
D_\varepsilon$ for $x\in\Lambda$  provide a foliation of the set
$D_\varepsilon$ by leaves (plaques) of dimension $d-1$. It is not
hard to verify that
this property also holds for any  diffeomorphism $g$
in~$\mathcal{V}$. We will denote by $W^s_{loc}(x,g)$ the stable
local manifold at $x$ for $g$.

Now we build the robust heteroclinc tangency of elliptic type.
Recall that a heteroclinic tangency $y\in W^u(Q)\cap W^s(P)$, is
of \textit{elliptic} type if there is a neighborhood $U$ of $y$
contained in either, $W^u(Q)$ or $W^s(P)$, say $W^u(Q)$, such that
any point in $U-\{y\}$ belongs to the same side of the tangent
space $T_y W^u(Q)$. We consider a fixed point $P\in \Lambda$ and a
small open  ball $B$ centered at $P$ such that $\overline{B}$ is
contained in $D_{\varepsilon}$. We observe that for every $g\in
\mathcal{V}$, $B$ is foliated by
\[\mathcal{F}_g(x)\eqdef W^s_{loc}(x,g)\cap B,\quad x\in\Lambda_g.\]
Consider a hyperbolic fixed point $Q \not \in D_\varepsilon$ of
$f$ with stable index $d-1-k$. By means of a homotopic
deformation, we force to the $(k+1)$-dimensional unstable manifold
$W^{u}(Q)$ intersects non-transversely the stable manifold
$W^{s}(P)$ in a heteroclinic tangency of elliptic type,
namely~$y$. Taking a suitable iterated if necessary, we can assume
that $y$ is in $B$.
\begin{figure}
\centering
\begin{overpic}[scale=0.7,
]{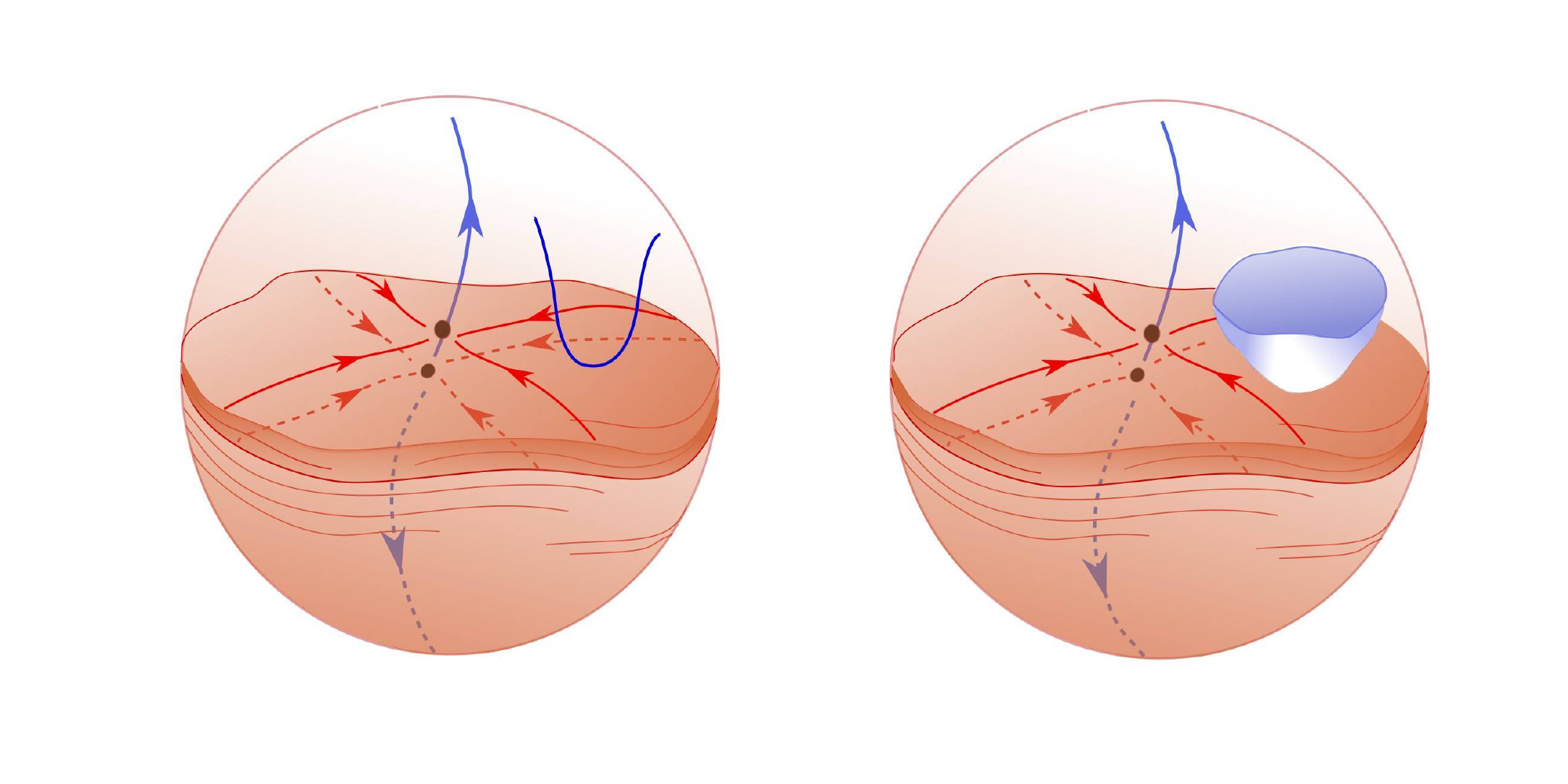} \scriptsize
        \put(108,100){\large $P_g$}
            \put(60,122){\large $\mathcal{F}_g(\bar{t})$}
               \put(163,150){\large $U_g$}
                  \put(162,108){\normalsize $y_g$}
                       \put(133,175){\large $\gamma_g(t)$}
    \put(165,117){\circle*{2}}
        \put(120,10){\large (a)}
       \put(330,175){\large $\gamma_g(t)$}
      \put(267,125){\large $\mathcal{F}_g(\bar{t})$}
    \put(360,135){\large $U_g$}
    \put(306,100){\large $P_g$}
    \put(360,101){\normalsize $y_g$}
    \put(362,108){\circle*{2}}
         \put(315,10){\large (b)}
\label{geometry}
 \end{overpic}
\caption{(a) $C^1$-robust equidimensional tangency. (b)
$C^1$-robust heterodimensional tangency.} \label{fig1}
\end{figure}
Thus, this last diffeomorphism, that again we call $f$, has a
heteroclinic tangency of codimension $c_T=1$ and signed co-index
$k_T=k$ with $0\leq k\leq d-2$, associated with the saddles $P\in
\Lambda$ and~$Q$.

On the other hand, by definition, there exists a neighborhood $U$
of $y$ contained in $W^{u}(Q)$ such that $U-\{y\}$ is contained in
$W^{s}(\Lambda)\pitchfork W^{u}(Q)$. See Figure~\ref{fig1}. We
will see that the $C^1$-persistence of these last transverse
intersections provides a $C^1$-robust heteroclinic tangency
associated with $\Lambda$ and $Q$.
Besides, for each $g\in \mathcal V$, we consider a small curve
$\gamma_g:t\in(-r,r)\mapsto \gamma_g(t)\in \Lambda_g\subset
D_{\varepsilon}$ parameterizing a small local unstable manifold of
the continuation $P_g=\gamma_g(0)$ of $P$ such that
\[B=\bigcup_{t\in(-r,r)}\mathcal F_g(t) \quad \text{where} \quad
\mathcal{F}_g(t)=\mathcal{F}_g(\gamma_g(t)).\] Since $y$ is a
heteroclinic tangency of elliptic type between $W^{u}(Q)$ and
$W^{s}(P)$ we can assume that (see Figure~\ref{fig1})
\[
\begin{split}
&U \cap \mathcal F_f(0)=\{y\},
\\
&U \cap \mathcal F_f(t)\subset W^{u}(Q)\pitchfork
W^{s}(\gamma_f(t)) \quad \mbox{for all $t\in (0,r)$},
\\
&U\cap \mathcal F_f(t)= \emptyset \quad \mbox{for all $t\in
(-r,0)$}.
\end{split}
\]
Our conditions imply that  for each $g\in \mathcal V$, the set
\[
I_g=\{t\in (-r,r): U_g\pitchfork \mathcal F_g(t) \neq \emptyset\}
\]
is inferiorly bounded where $U_g$ is a continuation in
$W^s_{loc}(Q_g)$ of the neighborhood $U$.
Thus, if $\bar{t}$ is the infimum of $I_g$ then $U_g$ and
$\mathcal F_g(\bar{t})$ meet in a heteroclinic tangency $y_g$ of
codimension $c_T=1$ and signed co-index $k_T=k$ with $0 \leq k\leq
d-2$. This completes the proof of Theorem~\ref{t.MT}.

\begin{figure}
\centering
\begin{overpic}[scale=0.7,
]{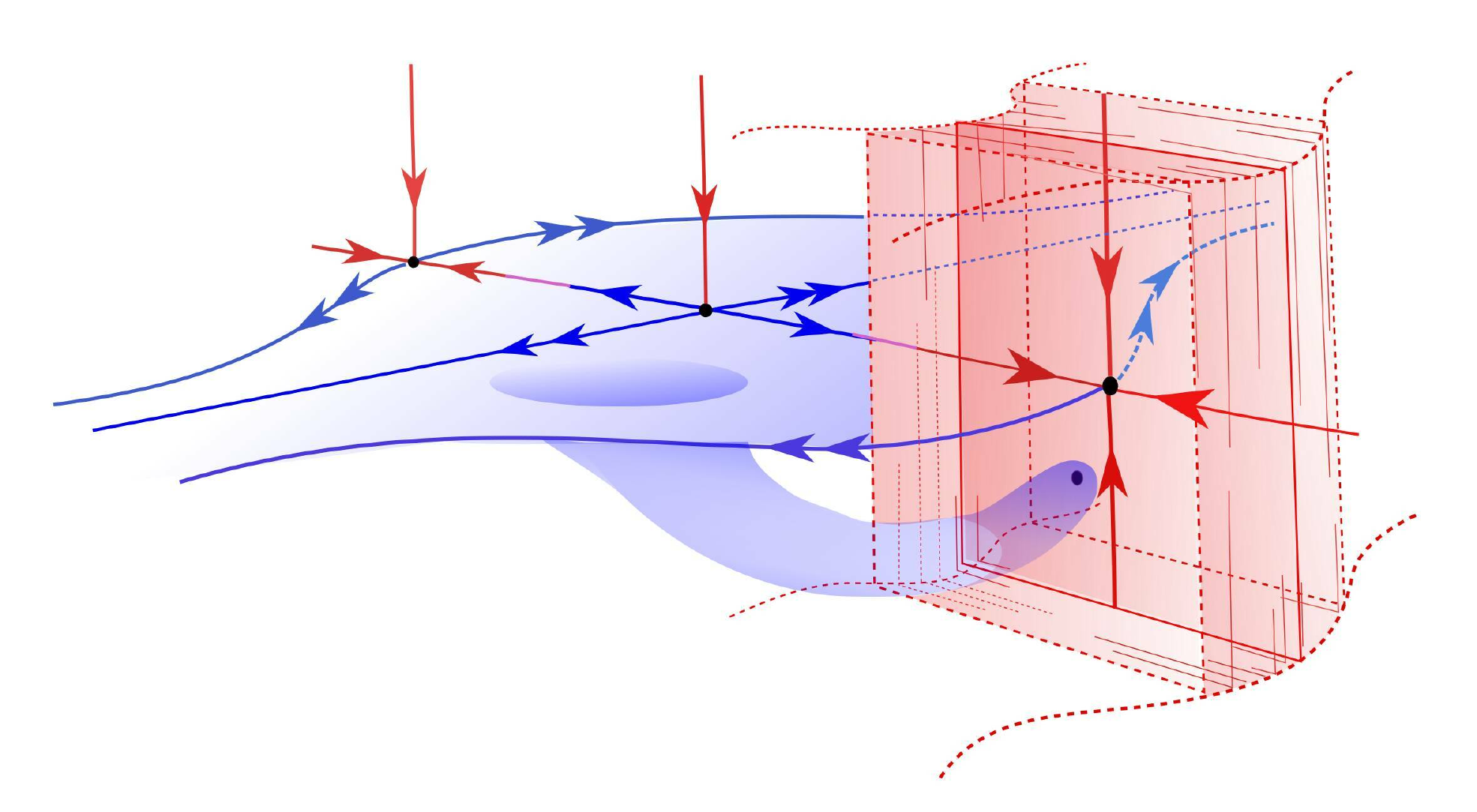}\scriptsize
        \put(282,125){\Large $P$}
            \put(172,146){\Large $Q$}
\label{geometry}
 \end{overpic}
\caption{Heterodimensional tangency constructed from a deformation
of diffeomorphism $f$ of $\mathbb{T}^3$ locally defined as the
product $g\times h$ where $h$ has a DA-attractor in $\mathbb{T}^2$
and $g$ is a contraction.} \label{fig32}
\end{figure}

\section{Differential construction of $C^1$-robust heteroclinic
tangencies} \label{sec3}

In this section we will prove again Theorem~\ref{t.MT} but now
using a different argument. This different approach allows us to
generalize the result to get robust heterodimensional tangencies
of large codimension in the next section. In order to explain the
idea behind of this new approach we will consider again the
situation described in Figure~\ref{fig:01}.

\begin{figure}
~\vspace{-0.75cm} \centering
\begin{overpic}[scale=0.74,
]{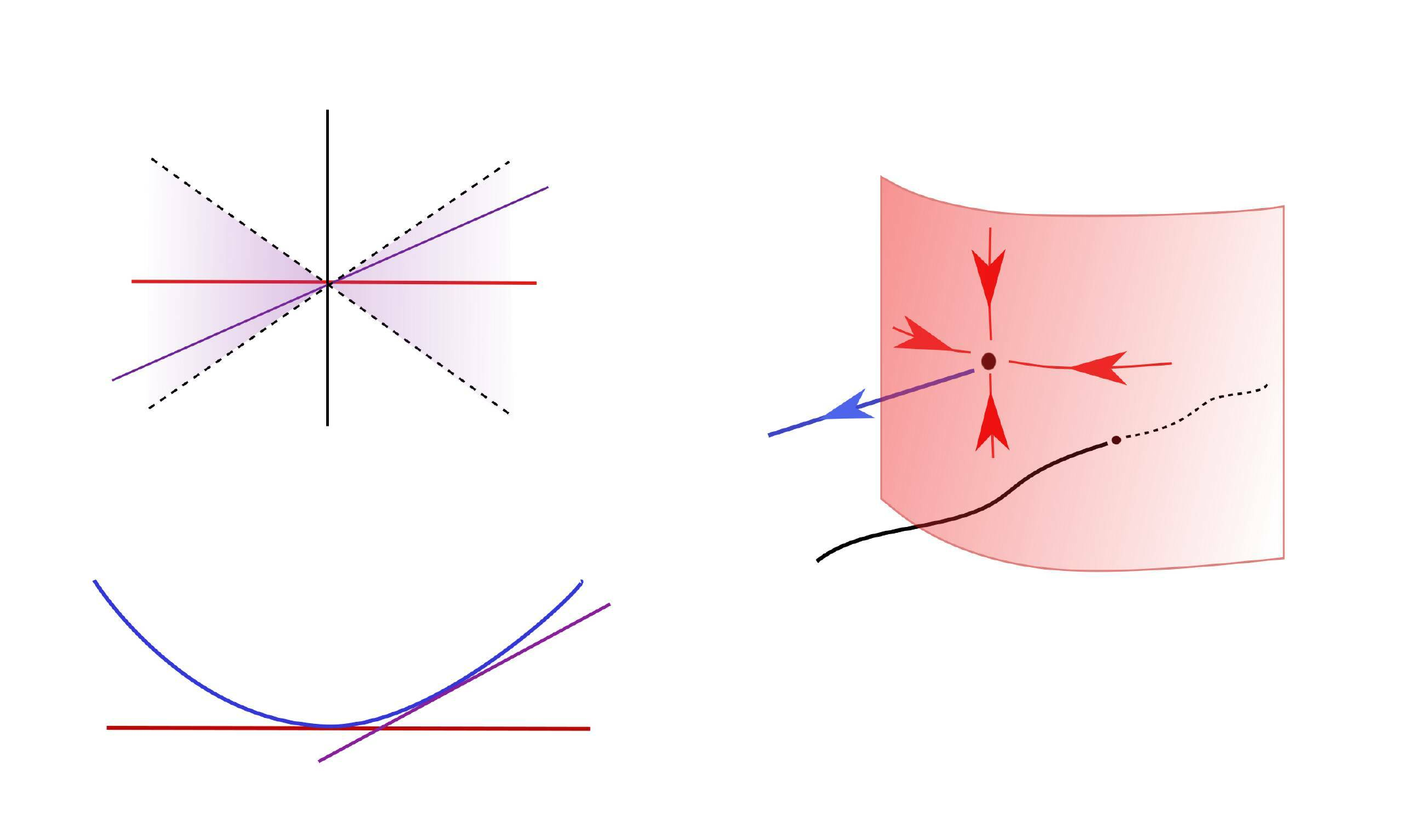} \scriptsize
                            \put(37,90){{\Large $\mathcal{S}\subset W^u(Q)$}}
                                    \put(200,30){{\Large $W^s(P)$}}
            \put(25,195){{\Large $\mathcal{C}^{s}_{\alpha}$}}
                     \put(185,215){\Large{$E$}}
                             \put(182,177){\Large{$E^s$}}
               \put(201,75){\Large{$T_x\mathcal{S}$}}
                      \put(105,36){\circle*{3}}
                            \put(103,45){\Large $y$}
                         \put(142,47){\circle*{3}}
                               \put(138,55){\Large $x$}
         \put(330,163){\normalsize{$(P,E^s)$}}
      \put(363,185){\Large{$W_{loc}^s(\Lambda^s)$}}
    \put(280,70){\Large $\mathcal{S}^s$}
   \put(362,115){\normalsize{$y^s=(y,E^s)$}}
\label{geometry}
 \end{overpic}
\caption{Figure on the left shows the folding manifold
$\mathcal{S}$ on $\mathcal{M}$ and how cover the cone
$\mathcal{C}^s_\alpha$ on $\mathbb{R}^d$. The other  shows the
transverse intersection between $\mathcal{S}^s$ and the stable
manifold of $\Lambda^s$ on $\mathbb{R}^d\times G(s,d)$}
\label{fig2}
\end{figure}

By considering, if necessary, local coordinates around $P$, define
the projective cocycle $f^s(x,E)=(f(x),Df(x)E)$ where $x \in
\mathbb{R}^2$  and $E$ belongs to the space $G(1,2)$ of
one-dimensional vector space in $\mathbb{R}^2$. Recall that the
$\Lambda$ is a hyperbolic attractor of $f$ with splitting
$E^s\oplus E^u$. Hence, the set $\Lambda^s=\Lambda \ltimes E^s
=\{(x,E): x\in \Lambda, \, E=E^s(x)\}$ is a hyperbolic set of
$f^s$ where the direction corresponding to the variable in
$G(1,2)$ is uniformly expanding. Thus,
$W^s(\Lambda^s)=W^s(\Lambda)\ltimes E^s=\{(x,E): x\in
W^s(\Lambda), E=E^s(x)\}$ is a two-dimensional manifold in the
three-dimensional space $\mathbb{R}^2\times G(1,2)$ as it is
showed in Figure~\ref{fig2}. On the other hand the unstable
manifold of $Q$ contains a folding manifold that we denote by
$\mathcal{S}$. That is, a small piece of the unstable manifold
contained the point $y$ in its interior. Namely, this manifold
folds with respect to the stable cone-field of $f$ at the point
$P\in\Lambda$ as it is represented Figure~\ref{fig2}. That is, by
considering linear transport to the origin of $\mathbb{R}^2$, the
union of tangent spaces $T_x S$ where $x\in \mathcal{S}$ cover the
cone $\mathcal{C}^s_\alpha=\{(x,y): |y|\leq \alpha |x| \}$ for
some small $\alpha>0$. This property allows us to see the set
$\mathcal{S}^s=S\ltimes TS=\{(x,E): x\in S, E=T_x\mathcal{S}\}$ as
a graph of a function $E\in \mathcal{C}^s_\alpha \mapsto x=x(E)\in
\mathcal{S}$. In other words, as a one-dimensional manifold in
$\mathbb{R}^2\times G(1,2)$ which is the image of a graph over
$G(1,2)$ and thus transversally intersecting $W^s(\Lambda^s)$ at
the point $(y,E^s)$ where $E^s=E^s(P)=\mathbb{R}\times\{0\}$.
Since this intersection is transversal, it persists for any small
perturbation. In particular, for any small perturbation $g$ of
$f$, we get a intersection point between $\mathcal{S}^s$ and
$W^s(\Lambda_g^s)$ where $\Lambda_g^s$ is the continuation of
$\Lambda^s$ for cocycle $g^s$ induced by $g$ in
$\mathbb{R}^2\times G(1,2)$.  Notice that this intersection point
between $\mathcal{S}^s$ and $W^s(\Lambda^s_g)$  provides the
tangency point and direction between $\mathcal{S}$ and a stable
manifold $W^s(z)$ for some $z\in \Lambda_g$. Therefore, we get a
robust tangency.

\subsection{Construction} Now we will give the formal details.
Recall the  $C^r$-diffeomorphism $f:\mathcal{M} \to \mathcal{M}$
in~\eqref{e.f} and the attractor $\Lambda=\{0^{d-2}\}\times
\Sigma$ in~\eqref{e.a}. This set has a well defined hyperbolic
structure $T_\Lambda\mathcal{M}=E^s\oplus E^u$ where the stable
bundle $E^s$ of $\Lambda$ is $(d-1)$-dimensional.
Observe that $E^s$ can be uniquely extended to a continuous
$Df$-invariant fiber bundle, which we also denote by $E^s$, over
each leaf $W^s_{loc}(x)$, $x\in \Lambda$, and so to the whole set
$D_\varepsilon$. Moreover, from the hyperbolicity of $\Lambda$, we
have that $E^{s}$ varies continuously with respect to the point
$x\in D_{\varepsilon}$ and  the diffeomorphism $g$ in a small
$C^1$ neighborhood $\mathcal{V}$ of~$f$.  Thus, for each $g\in
\mathcal{V}$ the set $D_\varepsilon$ is foliated by
$(d-1)$-dimensional (local) stable manifolds of $\Lambda_g$ which
are tangent to the bundle $E^s_g$ continuation of $E^s$.

Fix $s\eqdef d-1$. On the set $D_\varepsilon$ we have defined a stable cone-field
$\mathcal{C}^s_\alpha$ of dimension $s$ and size $\alpha>0$
 satisfying
\[
   E^s(x) \in \mathcal{C}^s_\alpha(x)\subset T_x\mathbb{R}^d \quad \text{and} \quad   Df^{-1}(x)\mathcal{C}^s_\alpha(f(x))\subset \mathcal{C}^s_\alpha(x)\quad \mbox{for all $x\in D_\varepsilon$}.
\]
In what follows, for notational simplicity, we omit  the subscript $\alpha$ in the notation $ \mathcal{C}^s_\alpha$.
The $s$-dimensional cone-field $\mathcal{C}^s$ can be seen as an
open set of the Grassmannian manifold
$G_{s}(\mathbb{R}^d)=\mathbb{R}^d\times G(s,d)$ where $G(s,d)$ is
set of the $s$-planes in $\mathbb{R}^d$.  Observe that in the case $d=2$, this Grassmannian manifold is the projective space.
Now consider the
differential cocycle induced by $f$ on $G_{s}(\mathbb{R}^d)$ given
by
\[
f^s: G_s(\mathbb{R}^d)\to G_s(\mathbb{R}^d), \quad
f^s(x,E)=(f(x),Df(x)E).
\]
Observe that $f^s$ is a $C^{r-1}$-diffeomorphism of
$G_s(\mathbb{R}^d)$ with $r\geq 2$.
Since $E^s$ is a repelling point of $Df$, 
\[
\Lambda^s=\Lambda \ltimes E^s \eqdef \{(x,E): x\in \Lambda, \
E=E^s(x) \}
\]
is hyperbolic set of $f^s$ with stable index equals to $\dim
E^s=s$. Namely, the splitting of $\Lambda^s$ is of the form
$E^s\oplus E^u\oplus E^{uu}$ where $E^s\oplus E^u$ corresponds
with the splitting of $\Lambda$ for $f$ and $E^{uu}$  with the
directions over $G(s,d)$. On the other hand, the local stable
manifold $W^s_{loc}(\Lambda^s)$ of $\Lambda^s$ contains the set
\[
D^s_\varepsilon=D_\varepsilon\ltimes E^s \eqdef \{(x,E):
x\in D_\varepsilon, \ E=E^s(x) \}.\] This is a manifold of
codimension the dimension of $G(s,d)$.

We now construct the heteroclinic tangency. First, we give a more
formal notion of heterodimensional tangency between any two
manifolds.

\begin{defi}
Let $\mathcal{L}$ and $\mathcal{N}$ be two submanifolds of
$\mathcal{M}$.
We say that $\mathcal{L}$ and $\mathcal{N}$ has a heteroclinic
tangency at $x\in \mathcal{L}\cap \mathcal{N}$ if $
  c_T= d_T - k_T >0
$
where
\[
   d_T=\dim T_x \mathcal{L} \cap T_x \mathcal{N} \quad \text{and}
   \quad  k_T=\dim \mathcal{L} + \dim  \mathcal{N} - \dim \mathcal{M}.
\]
The numbers $d_T=d_T(x,\mathcal{L},\mathcal{N})$,
$c_T=c_T(x,\mathcal{L},\mathcal{N})$ and
$k_T=k_T(x,\mathcal{L},\mathcal{N})$ are called, respectively,
dimension, codimension and signed co-index of the tangency between
$\mathcal{L}$ and $\mathcal{N}$ at $x$. The tangency is said to be
\emph{heterodimensional} if $k_T\not= 0$ and
\emph{equidimensional} if $k_T=0$.
\end{defi}

For simplicity and clarity of the exposition we restrict the
construction to the case of signed co-index $k_T=s-1=d-2$. By
means of a similar argument one can also get the other possible
co-index in Theorem~\ref{t.MT}. We will consider two types of
tangencies: elliptical (see Section~\ref{sec2}) and of saddle
type. We recall that a tangency $y\in W^u(Q)\cap W^s(P)$ is of
\textit{saddle} type if every neighborhood $U$ of $y$ contained in
either, $W^u(Q)$ or $W^s(P)$, say $W^u(Q)$, intersects each
connected component of $\mathbb{R}^d\setminus T_y W^u(Q)$.

\begin{exap}
Consider a diffeomorphism having two periodic saddles $P$ and $Q$
such that $0^d\in W^s_{loc}(P)\subset \mathbb{R}^{s}\times \{0\}$,
with $0^d\neq P$ and $\dim W^s(Q)=s$. Assume that,
 $\mathcal{S}([-1,1]^{s})\subset W^u(Q)$, where
\begin{equation} \label{eq1}
  \mathcal{S}:(t_1,\dots,t_s)\mapsto (t_1,\dots,t_s,
  t_1^2+\dots+t_ s^2)
  \end{equation}
  or
  \begin{equation} \label{eq1.1}
   \mathcal{S}:(t_1,\dots,t_s)\mapsto (t_1,\dots,t_s,
  t_1 t_2+\dots+t_{s-1} t_s).
\end{equation}
Then $0^d$ is a heteroclinic tangency of elliptic type between
$W^s(P)$ and $W^s(Q)$ choosing $\mathcal{S}$ as in~\eqref{eq1} and
the saddle type if $\mathcal{S}$ is as in~\eqref{eq1.1}.
\end{exap}

As it is usual, we identified the embedding $\mathcal{S}$ (as
those described above) with its image. Now, using  the
$s$-dimensional manifold $\mathcal{S}$ in~\eqref{eq1} and
~\eqref{eq1.1} we create  a tangency between the leaves the
foliation of $D_\varepsilon$ by stable manifold (of dimension $s$)
of $\Lambda$. Fix  a fixed point $P\in\Lambda$ and consider $y\in
W^s_{loc}(P)$ with $y\neq P$.
 Modifying slightly the construction of the attractor if necessary, we can
consider coordinates $(t_1,\dots, t_d)$ in neighborhood of $P$
such that
\begin{itemize}
\item
 $P$ is identified with $(1^{d-1},0)$ and $y$ with $0^d$,
 \item the local unstable manifold $W^u_{loc}(P)$ is $t_1=\dots=t_{d-1}=1$,
 \item for each $z=(1^{d-1},t)\in W^u_{loc}(P)$, the local stable manifold $W^s_{loc}(z)$ is $t_d=t$; and
\item the bundle $E^s$ is trivial on this neighborhood.
\end{itemize}
Hence, in this local coordinates we can assume that
$E^s=\mathbb{R}^s\times \{0\}$ and \[ \mathcal{C}^s=\{(u,v)\in
\mathbb{R}^s\oplus \mathbb{R}: \|v\|< \alpha\, \|u\|\}\cup
\{0^d\}\] where $\alpha>0$ is a small constant.

At this coordinates, the folding manifolds $\mathcal{S}$ in
~\eqref{eq1} and~\eqref{eq1.1} intersect $W^s(P)$ at $y$ in a
heteroclinic tangency of codimension $c_T=1$ and signed co-index
$k_T=s-1=d-2$. The next result state that this tangency persist
under perturbations.

\begin{prop} \label{prop1}
The folding manifold $\mathcal{S}$ has a heteroclinic tangency
with the stable foliation of $\Lambda$ which persists under small
$C^1$-perturbations of $f$.
\end{prop}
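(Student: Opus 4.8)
The plan is to lift the whole picture to the Grassmannian bundle $G_s(\mathbb{R}^d)=\mathbb{R}^d\times G(s,d)$ and to reduce the persistence of the tangency to the persistence of a zero of an $\mathbb R^{d-1}$-valued map, detected by the Brouwer degree. Write $\mathcal G_f\colon\mathcal S\to G(s,d)$, $x\mapsto T_x\mathcal S$, for the Gauss map of $\mathcal S$, so that $\mathcal S^s=\{(x,\mathcal G_f(x)):x\in\mathcal S\}$, and recall that near $y=0^d$ the stable foliation is recorded upstairs by the graph $W^s_{\loc}(\Lambda^s)=\{(x,E^s(x)):x\in D_\varepsilon\}$. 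The governing dictionary is: a point $x\in\mathcal S\cap D_\varepsilon$ is a tangency between $\mathcal S$ and the stable leaf through $x$ precisely when $T_x\mathcal S=E^s(x)$, i.e. when $\mathcal G_f(x)$ and $E^s(x)$ agree in $G(s,d)$. First I would check that $y$ is such a point: for both choices \eqref{eq1} and \eqref{eq1.1} the defining quadratic has vanishing gradient at the origin, so $T_y\mathcal S=\mathbb R^s\times\{0\}=E^s(y)$, while $y\in W^s_{\loc}(P)\subset W^s(\Lambda)$; hence $y^s=(y,E^s)$ lies on both $\mathcal S^s$ and $W^s_{\loc}(\Lambda^s)$.

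Next I would verify that this coincidence is nondegenerate. In the chart $\psi\colon\mathcal N\to\mathbb R^{d-1}$ of $G(s,d)$ around $E^s=\mathbb R^s\times\{0\}$ that sends the graph of a linear functional $u\mapsto\langle w,u\rangle$ to $w$, the Gauss map of $\mathcal S=\{(t,\phi(t))\}$ reads $\psi(\mathcal G_f(t))=\nabla\phi(t)$, so that $D(\psi\circ\mathcal G_f)(0)=\operatorname{Hess}\phi(0)$. For $\phi=t_1^2+\dots+t_s^2$ as in \eqref{eq1} and for $\phi=t_1t_2+\dots+t_{s-1}t_s$ as in \eqref{eq1.1} this Hessian is a nondegenerate symmetric matrix, so $\mathcal G_f$ is a local diffeomorphism at $y$; this is exactly the folding property, that the planes $T_x\mathcal S$ cover a neighbourhood of $E^s$ in $G(s,d)$, equivalently the cone $\mathcal C^s$. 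Since $E^s(x)\equiv\mathbb R^s\times\{0\}$ is constant near $y$ in these coordinates, the map
\[
   F_f\colon \Delta\to\mathbb R^{d-1},\qquad F_f(x)=\psi(E^s(x))-\psi(\mathcal G_f(x))
\]
on a small $s$-disk $\Delta\subset\mathcal S\cap D_\varepsilon$ around $y$ equals $-\nabla\phi$ near $y$, and so has $y$ as an isolated zero with $DF_f(y)=-\operatorname{Hess}\phi(0)$ invertible; thus $\deg(F_f,\Delta,0)=\pm1$. Equivalently, $T_{y^s}\mathcal S^s$ projects isomorphically onto the $G(s,d)$-fibre while $T_{y^s}W^s_{\loc}(\Lambda^s)=\mathbb R^d\times\{0\}$, and as $\dim\mathcal S^s+\dim W^s_{\loc}(\Lambda^s)=(d-1)+d=2d-1=\dim G_s(\mathbb R^d)$, the two manifolds meet transversely at the isolated point $y^s$.

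For robustness I would replace $f$ by any $g\in\mathcal V$ and run the same count with $F_g(x)=\psi(E^s_g(x))-\psi(\mathcal G_g(x))$. Both ingredients are first-order objects: $E^s_g$ varies $C^0$-continuously with $g$ in the $C^1$-topology by the hyperbolicity of $\Lambda$ recalled above, and $\mathcal G_g(x)=T_x\mathcal S_g$ varies $C^0$-continuously because the folding piece $\mathcal S_g\subset W^u(Q_g)$ varies $C^1$-continuously with the hyperbolic continuation of $Q$. Hence $F_g\to F_f$ uniformly on $\overline\Delta$ as $g\to f$. Shrinking $\mathcal V$ so that $F_g$ stays zero-free on $\partial\Delta$, homotopy invariance of the degree gives $\deg(F_g,\Delta,0)=\deg(F_f,\Delta,0)=\pm1\neq0$, so $F_g$ has a zero $x_g\in\Delta$. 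At $x_g$ one has $T_{x_g}\mathcal S_g=E^s_g(x_g)=T_{x_g}W^s_{\loc}(x_g,g)$, so $\mathcal S_g$ is tangent along a full $s$-plane to the stable leaf of $\Lambda_g$ through $x_g$. Both manifolds have dimension $s=d-1$, whence $d_T=s$, $k_T=2(d-1)-d=d-2$ and $c_T=d_T-k_T=1$, the asserted tangency; since $x_g$ exists for every $g\in\mathcal V$, it is $C^1$-robust.

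The step I expect to be delicate is exactly the one that makes the argument work in class $C^1$ rather than $C^2$. The folding condition is the nondegeneracy of $\operatorname{Hess}\phi(0)$, a second-order property of $\mathcal S$, and it is \emph{not} preserved under $C^1$-perturbations of $\mathcal S_g$. The resolution is that I never need folding for $g\neq f$: I only need the coincidence $F_g=0$, and its existence is governed by $\deg(F_g,\Delta,0)$, a $C^0$-invariant of $F_g$. Since $F_g$ is built from the Gauss map and the stable bundle—both controlled $C^0$-ly by the $C^1$-size of the perturbation—the degree persists even though $\mathcal S^s_g$ and $W^s_{\loc}(\Lambda^s_g)$ need not remain transverse. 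This is precisely the mechanism that upgrades the $C^2$-robustness of the earlier constructions to $C^1$-robustness here.
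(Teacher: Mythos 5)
Your argument is correct, and it rests on the same two pillars as the paper's proof: the lift to $G_s(\mathbb{R}^d)$, where tangencies between $\mathcal{S}$ and stable leaves become coincidences $T_x\mathcal{S}=E^s(x)$, and the folding nondegeneracy at $y$ (your invertibility of $\operatorname{Hess}\phi(0)$ is exactly the invertibility of the matrix $A(E^s)$ in the paper's proof of Lemma~\ref{lem1}). Where you genuinely differ is in how persistence is certified. The paper stays upstairs: by Lemma~\ref{lem1}, $\mathcal{S}^s$ is a disc of dimension $\dim G(s,d)$ (a graph over the fibre direction), $W^s_{loc}(\Lambda^s)$ has complementary codimension, and the resulting ``topological transversality'' is claimed to persist because $\Lambda^s_g=\Lambda_g\ltimes E^s_g$ is still a topological hyperbolic set for the cocycle $g^s$, which is only $C^0$-close to $f^s$. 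You instead push everything down to the disc $\Delta\subset\mathcal{S}$ and count preimages of $0$ for the difference map $F_g=\psi(E^s_g)-\psi(\mathcal{G}_g)$ via the Brouwer degree. This is a more elementary and self-contained route: it uses only the continuous dependence $g\mapsto E^s_g$ (already established in the paper's setup) and never the dynamics of $g^s$ upstairs, which is delicate since $g^s$ is merely a homeomorphism when $g$ is $C^1$; your homotopy-invariance step is precisely the rigorous content behind the paper's assertion ``thus we still have a transversal intersection.'' Moreover, since you allow the folding piece itself to move ($\mathcal{S}_g\subset W^u(Q_g)$, with Gauss map varying $C^0$ under $C^1$ motion of $\mathcal{S}_g$), your single argument also subsumes the paper's Remark following the proposition, which is what one actually needs to deduce Theorem~\ref{t.MT}. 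One caveat, inherited from the paper rather than introduced by you: for the saddle-type model \eqref{eq1.1} the Hessian at $0$ is nondegenerate only under the disjoint-pairs reading $t_1t_2+t_3t_4+\cdots+t_{s-1}t_s$ (with consecutive products and $s=3$ it is singular); this is harmless for the proposition, since the elliptic model \eqref{eq1} suffices.
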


The proof of this proposition makes use of the following result:

\begin{lem} \label{lem1}
The set $\mathcal{S}^s= S\ltimes TS \eqdef \{(x,E): x\in
\mathcal{S}, \ E= T_x \mathcal{S} \}$ is a manifold of dimension
$\dim G(s,d)$ embedded as a disc in $\mathbb{R}^d\times G(s,d)$.
Namely it is a graph of a function of the form $E\in \mathcal{C}^s
\mapsto x=x(E)\in \mathcal{S}$.
\end{lem}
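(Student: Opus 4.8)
The plan is to read the Gauss map $x\mapsto T_x\mathcal{S}$ in a single chart of $G(s,d)$ adapted to $E^s$ and to check that, in that chart, it is a linear isomorphism; everything in the statement then follows formally, and the dimension count is automatic.

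\emph{Setting up the chart.} Since $s=d-1$, an element of $G(s,d)$ is a hyperplane of $\mathbb{R}^d=\mathbb{R}^s\oplus\mathbb{R}$, and those transverse to the $e_d$-axis are exactly the graphs $E_\ell=\{(u,\ell(u)):u\in\mathbb{R}^s\}$ of linear functionals $\ell\in(\mathbb{R}^s)^*$. The assignment $\ell\mapsto E_\ell$ is a chart of $G(s,d)$ centred at $E^s=E_0=\mathbb{R}^s\times\{0\}$; note that $\dim G(s,d)=s(d-s)=s$. Identifying $(\mathbb{R}^s)^*\cong\mathbb{R}^s$ by the Euclidean product, a plane $E_\ell$ is contained in $\mathcal{C}^s=\{(u,v):\|v\|<\alpha\|u\|\}\cup\{0^d\}$ precisely when $\|\ell\|<\alpha$; thus, viewed as an open set of the Grassmannian, $\mathcal{C}^s$ is the ball $\{\ell:\|\ell\|<\alpha\}$ in this chart.

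\emph{Computing the Gauss map.} Write $\mathcal{S}(t)=(t,Q(t))$ with $Q(t)=\|t\|^2$ in case~\eqref{eq1} and $Q(t)=t_1t_2+\cdots+t_{s-1}t_s$ in case~\eqref{eq1.1}. The tangent vectors $\partial_i\mathcal{S}(t)=e_i+\partial_iQ(t)\,e_d$ span the graph $E_{\ell_t}$ of $\ell_t(u)=\langle\nabla Q(t),u\rangle$, so in the chart the Gauss map is $G(t)=\nabla Q(t)$. For~\eqref{eq1} this is $G(t)=2t$, and for~\eqref{eq1.1} it is $G(t)=Jt$, where $J$ is the linear involution interchanging $t_{2j-1}\leftrightarrow t_{2j}$. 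In either case $G$ is a linear isomorphism of $\mathbb{R}^s$, with inverse $w\mapsto w/2$, respectively $w\mapsto Jw$; for $\alpha$ small this inverse is defined on the whole ball $\{\|\ell\|<\alpha\}$ and takes values in the parameter disc.

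\emph{Conclusion.} The map $t\mapsto(\mathcal{S}(t),G(t))\in\mathbb{R}^d\times G(s,d)$ is an embedding, because its first block $\mathcal{S}$ already is; hence $\mathcal{S}^s$ is an embedded $s$-disc and $s=\dim G(s,d)$. Since $G$ is invertible, reparametrising by $E=G(t)$ presents $\mathcal{S}^s$ as the graph $E\mapsto x(E)=\mathcal{S}(G^{-1}(E))$ over the chart, and by the previous paragraph this graph is defined over all of the cone $\mathcal{C}^s$, exactly as asserted. The only point that needs care is the bookkeeping of the chart—realising $\mathcal{C}^s$ as a metric ball of functionals and checking that the Gauss image fills it—since the invertibility of $G$ is immediate once $\nabla Q$ has been computed.
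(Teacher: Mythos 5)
Your proof is correct, and its core coincides with the paper's: for the quadratic embeddings \eqref{eq1} and \eqref{eq1.1} the condition $E=T_x\mathcal{S}$ is linear in the parameter $t$, and the lemma follows once that linear problem is uniquely solvable for every $E$ in the cone. The execution, however, is genuinely different. The paper encodes $E$ by an arbitrary spanning basis $v_1,\dots,v_s$, rewrites $E=T_x\mathcal{S}$ as the square system $A(E)t=b(E)$ of \eqref{eq2}, checks $A(E^s)=2I_s$, and appeals to continuity of the determinant; so it obtains unique solvability only for $E$ sufficiently close to $E^s$, the basis-dependence of $A(E)$ is left implicit, and the saddle case is dismissed as ``similar''. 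You instead fix the canonical chart of $G(s,d)$ for $s=d-1$ (hyperplanes transverse to the $e_d$-axis, identified with functionals $\ell$), observe that $\mathcal{C}^s$ becomes exactly the metric ball $\{\|\ell\|<\alpha\}$, and compute the Gauss map in this chart as $\nabla Q$ --- an explicit linear isomorphism ($2\,\mathrm{Id}$, resp.\ the involution $J$) with an explicit inverse. This buys three things: the continuity/perturbation step disappears and the graph is obtained over the whole ball with quantitative control on $\alpha$; the saddle form \eqref{eq1.1} is handled concretely rather than by analogy; and the embedding claim itself is verified (the lift $t\mapsto(\mathcal{S}(t),G(t))$ is an embedding because its first factor is), a point the paper takes for granted. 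Two small caveats: your chart argument is tied to $s=d-1$, whereas the paper's basis formulation is the one that extends to the containment condition $E\leq T_x\mathcal{S}$ used in Section~\ref{sec4}; and your reading of \eqref{eq1.1} as $\sum_j t_{2j-1}t_{2j}$ requires $s$ to be even for $J$ to make sense --- a nondegeneracy issue equally hidden in the paper's ``the argument is similar''.
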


Let us postpone for a while the proof of lema, to conclude the proof of the proposition.

\begin{proof}[Proof of Proposition~\ref{prop1}]
Since $\mathcal{S}$ tangentially meets $W^s_{loc}(P)$ at $y$, we
get that $\mathcal{S}^s$ topologically transversally intersect
$W^s_{loc}(\Lambda^s)$  at $y^s=(y,E^s)$. The (topological)
transversality follows from Lemma~\ref{lem1} since $\mathcal{S}^s$
is a disc of $\dim G(s,d)$ and $W^{s}_{loc}(\Lambda^s)$ is a
manifold of codimension $\dim G(s,d)$. See Figure~\ref{fig2}.

Now consider a  diffeomorphism $g$ $C^1$-close to $f$. Observe
that the cocycle $g^s$ is a homeomorphism of ${G}_s(\mathbb{R}^d)$
only $C^0$-close to $f^s$. However, $\Lambda^s_g=\Lambda_g\ltimes
E^s_g$ is still a topological hyperbolic set for $g^s$ where
$\Lambda_g$ and $E^s_g$ are the continuation of $\Lambda$ and
$E^s$ for $g$. Thus, the set $W^s(\Lambda^s_g)$ contains a
manifold $C^0$-close to $W^s_{loc}(\Lambda^s)$ of codimension
$\dim G(s,d)$. Thus we still have a transversal intersection
between $\mathcal{S}^s$ and $W^s_{loc}(\Lambda^s_g)$. Observe that
if $(x,E)\in \mathcal{S}^s \cap W^s_{loc}(\Lambda^s_g)$ then $x\in
\mathcal{S}\cap W^s_{loc}(\Lambda_g)$, $E=E^s_g(x)$ and
$E=T_x\mathcal{S}$. Thus $E^s_g(x)=T_x\mathcal{S}$. This provides
a tangency between $\mathcal{S}$ and the stable foliation of
$\Lambda_g$ concluding the proof of the proposition.
\end{proof}

\begin{rem} Proposition~\ref{prop1} also holds for any small
enough $C^1$-perturbation of $\mathcal{S}$. To see this, if the
perturbation is $C^1$-close then we have a change of variable
$C^1$-close to the identity sending the perturbed manifold to the
folding manifold $\mathcal{S}$. Hence we get a new diffeomorphism
$g$ which is $C^1$-close to $f$. Thus, applying
Proposition~\ref{prop1} we get a tangency.
\end{rem}

Theorem~\ref{t.MT} follows from the above proposition and remark
by considering that the folding manifold is contained in the
unstable manifold of a hyperbolic fixed point of $f$ of unstable
index $s=d-1$. Observe that the codimension of the tangency  is
given by the formula $c_T=d_T-k_T$ where $d_T$ is the number of
tangent directions and $k_T$ is the co-indice between the
hyperbolic set involved. In this case, $c_T=s-(s-1)=1$ and
$k_T=s-1=d-2$.

To complete our construction we give the proof of  Lemma~\ref{lem1}.

\begin{proof}[Proof of Lemma~\ref{lem1}]
To prove that $\mathcal{S}^s$ is an embedded disc in
$\mathbb{R}^d\times G(s,d)$ we need to show that $\mathcal{S}^s$
is a graph of an injective function of the form
\[
   \mathcal{S}^s: E \in \mathcal{C}^s \mapsto x=x(E)\in G(s,d).
\]
To do this, we must associate to $E$ an unique point $x\in
\mathcal{S}$ such that $E=T_x\mathcal{S}$. In other words, we need
to show that
\[
   \mathcal{C}^s \subset \bigcup_{x\in \mathcal{S}}
   T_x\mathcal{S}.
\]
As above, we are standing that $\mathcal{C}^s$ is a small open set
in $G(s,d)$ centered at $E^s=\mathbb{R}^s\times \{0\}$ and the
tangent space $T_x\mathcal{S}$ as a vector space of
$\mathbb{R}^d$. Analytically, we need to solve the following
problem:  given $E \in \mathcal{C}^s$ we look for
$t=(t_1,\dots,t_s)$ such that $E=T_x\mathcal{S}$ where
$x=\mathcal{S}(t)$.

In order to do the calculation, we choose the elliptic form of the
folding manifold given in~\eqref{eq1}. For folding manifold of
saddle type in~\eqref{eq1.1} the argument is similar. Hence, 
\[
  T_x\mathcal{S}: (t'_1,\dots,t'_s) \in \mathbb{R}^s \mapsto
  (t'_1,\dots,t'_s,2t_1t'_1+\dots+2t_st'_s) \in \mathbb{R}^d,
  \quad \text{where $x=\mathcal{S}(t_1,\dots,t_s)$.}
\]
We write $E=\spn\langle v_1,\dots,v_s\rangle$ where
$v_i=(a_{1i},\dots,a_{di})$ for $i=1,\dots,s$. Hence
$E=T_x\mathcal{S}$ if, and only if, $v_i \in T_x\mathcal{S}$ for
all $i=1,\dots,s$. Equivalently, if
\[
    t'_{ji}=a^{}_{ji} \quad \text{for $j=1,\dots,s$ \ \ and} \ \ \
   2t^{}_{1}t'_{1i} + \dots + 2 t^{}_{s}t'_{si}= a^{}_{di} \quad
   \text{for all $i=1,\dots,s$}.
\]
Hence,
\begin{equation} \label{eq2}
2 \cdot
\begin{pmatrix}
a_{11} & \dots  & a_{1s} \\
       & \ddots       &    \\
a_{s1} & \dots  & a_{ss}
\end{pmatrix}
\cdot \begin{pmatrix}
t_{1} \\
\vdots \\
t_{s}
\end{pmatrix}
 =
\begin{pmatrix}
a_{d1} \\
\vdots \\
a_{ds}
\end{pmatrix}
\end{equation}
That is, we have a square linear system $A t = b$ where $A=A(E)$
and $b=b(E)$ depends on the vector space $E$. To find $t$ we need
to show that $A$ is an invertible matrix. To do this, we will take
as the vector space $E$ the center
$E^s=\mathbb{R}^s\times\{0\}=\spn\langle e_1,\dots e_s \rangle$ of
$\mathcal{C}^s$ where $e_i$  denotes the vector with a $1$ in the
$i$-th coordinate and 0's elsewhere.  We get in this case that
$A(E^s)=2 \cdot I_s$ where $I_s$ is the identity square matrix of
order $s$. Thus $\det A(E^s)\not = 0$. Then by the continuity for
all $E\in \mathcal{C}^s$ close to $E^s$ we uniquely
solve~\eqref{eq2} and thus we find $t=(t_1,\dots,t_s)$ such that
$E=T_x\mathcal{S}$ where $x=\mathcal{S}(t)$. This completes the
proof of the lemma.
\end{proof}

\section{$C^1$-robust heterodimensional tangencies of large codimension}
\label{sec4}

Fix $c_T\geq 1$ and $s> c_T$. Set $d=c_T\cdot (s+1)$. A hyperbolic
set $\Lambda$ of a diffeomorphism of a manifold $M$ is said to be
a \emph{codimension one expanding attractor} if for every $x\in
\Lambda$, holds that $W^u(x) \subset \Lambda$ and $\dim
W^u(x)=\dim M -1$. Let us take a codimension one expanding
hyperbolic attractor $\Lambda$ of a diffeomorphism $h$  on a
manifold of dimension $n=d-s+1$. In order to avoid the problem of
classifying the manifold that support these kind of attractors, we
set $\Sigma$ as the \emph{Derived from Anosov} (by short
$DA$-attractor) in the $n$-torus $\mathbb{T}^n$, see~\cite{Sm67}.
After that, we will consider a $C^r$ diffeomorphism $f$ of
$\mathbb{T}^d$ locally defined on
$D_\varepsilon=[-\varepsilon,\varepsilon]^{s-1}\times
\mathbb{T}^n$ for a fixed small $\varepsilon>0$ and $r\geq 2$ as
\[
  f=g\times h \quad \text{where} \quad g(t)=\lambda t \ \
  \text{for} \ \
  t\in [-\varepsilon,\varepsilon]^{s-1} \ \ \text{and} \ \
  0<\lambda<1.
\]
Notice that the set $\Lambda=\{0^{s-1}\}\times \Sigma$ is a
hyperbolic attractor of $f$ whose basin of attraction contains
$D_\varepsilon$. Moreover, $E^{s}=\mathbb{R}^{s-1}\times
\tilde{E}^{s}$ is the stable bundle of $\Lambda$ where
$\tilde{E}^s$ is the one-dimensional stable bundle of $\Sigma$ for
$h$. Thus, $s=\dim E^{s}$. Analogously as in  previous sections,
this bundle can be uniquely extended to a $Df$-invariant bundle
over $D_\varepsilon$ which we also denote by $E^s$. Consequently
the set $D_\varepsilon$ is foliated by $s$-dimensional stable
manifolds of $\Lambda$ which are tangent to
 $E^s$. This allows us
to consider a  stable cone-field $\mathcal{C}^s$
 of dimension $s$ defined in whole $D_\varepsilon$. As in
Section~\ref{sec3}, we defined the differential cocycle $f^s$
induced by $f$ on $G_s(\mathbb{R}^d)$. Similarly, we have that the
set $\Lambda^s_f=\Lambda \ltimes E^s$ is also a hyperbolic set of
$f^s$ with stable index equals to $s$ and whose local stable
manifold $W^s_{loc}(\Lambda^s_f)$ contains the set
$D^s_\varepsilon=D_\varepsilon\ltimes E^s$. Thus, this manifold
has by codimension the dimension of $G(s,d)$.


Restricting us to a small ball $B\subset D_\varepsilon$, we can
assume that the stable cone is give by
\begin{equation}\label{e.s-cones}
\mathcal{C}^s=\{(u,v)\in \mathbb{R}^s\oplus \mathbb{R}^{d-s}:
\|v\|<\alpha \|u\|\}
\end{equation}
where $\alpha>0$ is small enough and $E^s=\mathbb{R}^s\times
\{0^{d-s}\}$. We will consider a folding manifold $\mathcal{S}$ in
$B$ folded with respect to $\mathcal{B}$  which we introduce
formally as follows:

\begin{defi} A  manifold $\mathcal{S}$ of dimension $k\geq s$
 is called
\emph{folding manifold} in an open ball $B$ folded with respect to
the cone $\mathcal{C}^s$ if $\mathcal{S}\subset B$ and
 \[
\overline{\mathcal{C}^s} \subset \bigcup_{x\in \mathcal{S}}
   T_x\mathcal{S}.
\]
\end{defi}

We are understanding that $\overline{\mathcal{C}^s}$ is closure of
the open set $\mathcal{C}^s$ in $G(s,d)$ centered at $E^s$ which
we see as a cone in $\mathbb{R}^d$ and the tangent space
$T_x\mathcal{S}$ as a $k$-dimensional vector space of
$\mathbb{R}^d$. Taking $\alpha$ tends to zero we observe that the
above definition is in fact an infinitesimal property
of~$\mathcal{S}$. Thus without restriction we can assume that the
tangent space of $\mathcal{S}$ covers injectively the closure of
$\mathcal{C}^s$. This means that for every $E\in \mathcal{C}^s$ we
have a unique $x\in \mathcal{S}$ such that $E\leq T_x\mathcal{S}$.
Moreover, $x=x(E)$ varies continuously with respect to $E$.

\begin{exap}
Take $k=d-c_T=c_T\cdot s$.   Let us consider a $k$-dimensional
manifold $\mathcal{S}$ defined by
\begin{equation} \label{eq3}
  \mathcal{S}:(t_1,\dots,t_k)\mapsto (t_1,\dots,t_k,\,
  t_1^2,\, t_2^2, \dots, t^2_{c_T-1}, \,t^2_{c_T}+\dots+t_{k}^2).
\end{equation}
Hence, we have that
\[
  T_x\mathcal{S}: (t'_1,\dots,t'_s)  \mapsto
  (t'_1,\dots,t'_k,\,2t^{}_1t'_1,\dots,2t^{}_{c_T-1}t'_{c_T-1},\,2t^{}_{c_T}t'_{c_T}+\dots+2t^{}_kt'_k),
\]
where $x=\mathcal{S}(t_1,\dots,t_k)$. We write $E=\spn\langle
v_1,\dots,v_s\rangle \in \mathcal{C}^s$ where
$v_i=(a_{1i},\dots,a_{di})$ for $i=1,\dots,s$. Hence $E\leq
T_x\mathcal{S}$ if, and only if, $v_i \in T_x\mathcal{S}$ for all
$i=1,\dots,s$. Equivalently, if
\[
    t'_{ji}=a^{}_{ji} \quad \text{for $j=1,\dots,k$ \ \ and} \ \ \
  2t^{}_{\ell}t'_{\ell i} = a^{}_{k+\ell \, i}
  \quad \text{for $\ell=1,\dots,c_T-1$  \ \ and}
\]
\[
  2t^{}_{c_T}t'^{}_{c_T}+ \dots + 2 t^{}_{k}t'_{ki}= a^{}_{di}
 \quad
   \text{for all $i=1,\dots,s$}.
\]
This defines a linear system of $c_T \cdot s$ equations and $k$
variable. Since $k=c_T \cdot s$ we can write the system in the
form $At=b$ where $A=A(E)$ is a square matrix of ordem $k$ and
$b=b(E)$ is a vector in $\mathbb{R}^k$ depending  on the vector
space $E$. To find $t=(t_1,\dots,t_k)$ we need to show that $A$ is
an invertible matrix. To do this, we will take as the vector space
$E$ the center $E^s=\mathbb{R}^s\times\{0\}=\spn\langle e_1,\dots
e_s \rangle$ of $\mathcal{C}^s$ where $e_i$  denotes the vector
with a $1$ in the $i$-th coordinate and 0's elsewhere.  We get in
this case that $\det A(E^s)=2$.
Then, by  continuity, for all $E\in \mathcal{C}^s$ close to $E^s$
we uniquely solve the equation $At=b$ and thus we find
$t=(t_1,\dots,t_s)$ such that $E\leq T_x\mathcal{S}$ where
$x=\mathcal{S}(t)$. Therefore $\mathcal{S}$ is folding manifold
with respect to $\mathcal{C}^s$.
\end{exap}

As a consequence of the definition of folding manifold we get the
following lemma:

\begin{lem} \label{lem2}
Let $\mathcal{S}$ be a folding manifold folded with respect to
$\mathcal{C}^s$. Then the set
\[\mathcal{S}^s \eqdef \{(x,E):
x\in \mathcal{S}, \ E \leq T_x \mathcal{S} \ \text{with} \  \dim
E=s \}\] contains a manifold of dimension $\dim G(s,d)$ embedded
as a disc in $\mathbb{R}^d\times G(s,d)$.
\end{lem}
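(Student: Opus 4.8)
The plan is to produce the required disc as the graph of the map $E\mapsto x(E)$ over the cone $\mathcal{C}^s$, in the same spirit as Lemma~\ref{lem1} but now permitting $k\geq s$. By the defining covering property of a folding manifold together with the injectivity noted just before the statement, for each $E\in\mathcal{C}^s$ there is a \emph{unique} $x=x(E)\in\mathcal{S}$ with $E\leq T_x\mathcal{S}$, and $x(E)$ depends continuously on $E$. First I would use this to define
\[
   \Phi\colon \mathcal{C}^s\longrightarrow \mathbb{R}^d\times G(s,d),\qquad \Phi(E)=(x(E),E).
\]
By construction $\Phi(E)\in\mathcal{S}^s$ for every $E$, since $x(E)\in\mathcal{S}$, $E\leq T_{x(E)}\mathcal{S}$ and $\dim E=s$; thus $\Phi(\mathcal{C}^s)\subset\mathcal{S}^s$.

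Next I would check that $\Phi$ is a topological embedding. It is continuous because $x(E)$ is, and it is injective because its second coordinate already recovers $E$. In fact its inverse is the restriction to $\Phi(\mathcal{C}^s)$ of the projection $\mathbb{R}^d\times G(s,d)\to G(s,d)$, which is continuous, so $\Phi$ is a homeomorphism onto its image. Since $\mathcal{C}^s$ is an open ball in $G(s,d)$, the image $\Phi(\mathcal{C}^s)$ is then a disc of dimension $\dim G(s,d)$ embedded in $\mathbb{R}^d\times G(s,d)$ and sitting inside $\mathcal{S}^s$. The word ``contains'' in the statement is essential here: when $k>s$ the full set $\mathcal{S}^s$ fibres over $\mathcal{S}$ with fibre the Grassmannian of $s$-planes inside the $k$-plane $T_x\mathcal{S}$, hence has dimension larger than $\dim G(s,d)$; the graph of $\Phi$ is precisely the slice we single out.

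To upgrade this to a \emph{smooth} embedded submanifold in the concrete constructions I would argue by the implicit function theorem, generalising the computation of Lemma~\ref{lem1} and of the preceding Example. Parametrising $\mathcal{S}$ by $\psi(t)$ with $t$ in an open subset of $\mathbb{R}^k$, and describing $E$ near $E^s$ by its graph coordinate (a linear map $\mathbb{R}^s\to\mathbb{R}^{d-s}$), the incidence condition $E\leq T_{\psi(t)}\mathcal{S}$ becomes a smooth system which is square in $t$: each of the $s$ spanning vectors of $E$ yields the $c_T=d-k$ scalar relations coming from the extra coordinates of $\mathcal{S}$, giving $s\cdot c_T=k$ equations in the $k$ unknowns $t$. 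At $E^s$ this system reduces to the one already solved, whose Jacobian is nonsingular, so the implicit function theorem produces a smooth $t=t(E)$ and hence a smooth $x(E)=\psi(t(E))$ and a smooth $\Phi$.

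The hard part will be the dimension bookkeeping that makes this last system square and keeps its Jacobian invertible on a whole neighbourhood of $E^s$; this is exactly where the choice $k=c_T\cdot s$ enters, and it is what distinguishes the genuinely geometric content from the formal graph argument. For an arbitrary folding manifold the definition only grants continuity of $x(E)$, so the topological-embedding argument of the first two steps is what the general statement literally requires, while the implicit function theorem step delivers the extra smoothness needed when $\mathcal{S}$ is one of the explicit models used later.
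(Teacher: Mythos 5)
Your proposal is correct and its core argument is exactly the paper's proof: the definition of folding manifold (with the injectivity and continuity of $E\mapsto x(E)$ noted just before the statement) yields the embedding $E\in\mathcal{C}^s\mapsto(x(E),E)\in\mathcal{S}^s$, whose image is the required disc of dimension $\dim G(s,d)$. Your additional implicit-function-theorem discussion is not needed for the lemma itself (which only asserts a topological embedding) and in the paper that computation lives in the preceding Example, where the choice $k=c_T\cdot s$ makes the system $At=b$ square with $\det A(E^s)=2$.
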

\begin{proof}
From the definition of folding manifold, we have an injective
continuous function $E \in \mathcal{C}^s \mapsto x\in \mathcal{S}$
such that $E\leq T_x\mathcal{S}$. This defines a subset of
$\mathcal{S}^s$ which is an embedding given by  $E \in
\mathcal{C}^s \mapsto (x,E) \in
   \mathcal{S}\times G(s,d)$ proving the lemma.
\end{proof}

The following result is the analogous to Proposition~\ref{prop1}.

\begin{prop} \label{pro2}
Let $\mathcal{S}$ be a folding manifold in $B$ of dimension
$k=d-c_T=c_T \cdot s$  folded with respect to $\mathcal{C}^s$.
Then $\mathcal{S}$ has a heterodimensional tangency of codimension
$c_T$ and signed co-index $k_T=s-c_T>0$ with the stable foliation
of $\Lambda$ which persists under small $C^1$-perturbations of
$f$.
\end{prop}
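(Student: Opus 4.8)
The plan is to mirror the proof of Proposition~\ref{prop1} exactly, replacing the appeal to Lemma~\ref{lem1} by the appeal to its generalization Lemma~\ref{lem2}. The only genuine change is a bookkeeping of dimensions, since now the folding manifold has dimension $k=c_T\cdot s$ rather than $s$, and the tangency has codimension $c_T$ rather than $1$. Concretely, I would first lift everything to the Grassmannian bundle $G_s(\mathbb{R}^d)=\mathbb{R}^d\times G(s,d)$ via the differential cocycle $f^s$, exactly as in Section~\ref{sec3}. The hyperbolic attractor $\Lambda$ produces the hyperbolic set $\Lambda^s_f=\Lambda\ltimes E^s$ of $f^s$ with stable index $s$, whose local stable manifold $W^s_{loc}(\Lambda^s_f)$ contains $D^s_\varepsilon=D_\varepsilon\ltimes E^s$, a submanifold of codimension $\dim G(s,d)$ in $G_s(\mathbb{R}^d)$.

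Next I would invoke Lemma~\ref{lem2}: because $\mathcal{S}$ is folded with respect to $\mathcal{C}^s$, the lifted set $\mathcal{S}^s=\{(x,E):x\in\mathcal{S},\,E\le T_x\mathcal{S},\,\dim E=s\}$ contains a disc of dimension $\dim G(s,d)$ embedded in $\mathbb{R}^d\times G(s,d)$, realized as the graph $E\in\mathcal{C}^s\mapsto (x(E),E)$. Since $\mathcal{S}$ tangentially meets a stable leaf $W^s_{loc}(z)$ of $\Lambda$ at the tangency point, the point $(x,E^s)$ lies in both $\mathcal{S}^s$ and $W^s_{loc}(\Lambda^s_f)$. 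Here the dimension count closes: $\dim\mathcal{S}^s=\dim G(s,d)$ and $\operatorname{codim}W^s_{loc}(\Lambda^s_f)=\dim G(s,d)$, so inside $G_s(\mathbb{R}^d)$ the two sets meet in complementary dimensions, giving a (topological) transverse intersection that persists under $C^1$-perturbation. As in Proposition~\ref{prop1}, for $g$ close to $f$ the induced cocycle $g^s$ is only $C^0$-close, but $\Lambda^s_g=\Lambda_g\ltimes E^s_g$ remains a topological hyperbolic set, so $W^s(\Lambda^s_g)$ still contains a manifold $C^0$-close to $W^s_{loc}(\Lambda^s_f)$ of the same codimension, and the transverse intersection survives. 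Reading an intersection point $(x,E)$ back down yields $x\in\mathcal{S}\cap W^s_{loc}(\Lambda_g)$ with $E^s_g(x)=T_x\mathcal{S}$, i.e.\ a genuine tangency between $\mathcal{S}$ and the stable foliation of $\Lambda_g$.

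Finally I would verify the tangency invariants. At a tangency point the tangent directions shared by $\mathcal{S}$ (dimension $k=c_T\cdot s$) and the stable leaf $W^s(z)$ (dimension $s$) are exactly those $s$-planes $E$ with $E\le T_x\mathcal{S}$, so $d_T=s$. The signed co-index is $k_T=\dim\mathcal{S}+\dim W^s(z)-\dim\mathcal{M}=c_T\cdot s + s - d = (c_T+1)s - c_T(s+1) = s-c_T$, which is positive since $s>c_T$. Hence $c_T=d_T-k_T=s-(s-c_T)=c_T$, confirming both the codimension and the signed co-index asserted in the statement.

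The only step requiring care, and the one I would expect to be the main obstacle, is the transversality argument in the non-smooth bundle $G_s(\mathbb{R}^d)$: the cocycle $g^s$ is genuinely only $C^0$-close to $f^s$, so $W^s(\Lambda^s_g)$ is merely a topological manifold and one must use \emph{topological} transversality (complementary-dimension intersection of an embedded disc with a topological submanifold) rather than the usual differentiable notion. This is precisely the point handled in Proposition~\ref{prop1}, and the argument transfers verbatim once the dimension matching $\dim\mathcal{S}^s + \operatorname{codim}W^s_{loc}(\Lambda^s_f) = \dim G_s(\mathbb{R}^d)$ is checked; everything else is the routine translation of the $c_T=1$ case into the large-codimension count above.
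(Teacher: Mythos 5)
Your proposal is correct and follows essentially the same route as the paper's own proof: lift via the cocycle $f^s$ to $G_s(\mathbb{R}^d)$, use Lemma~\ref{lem2} to get the $\dim G(s,d)$-dimensional disc in $\mathcal{S}^s$, intersect it topologically transversally with $W^s_{loc}(\Lambda^s_f)$ of complementary codimension, carry persistence over from the argument of Proposition~\ref{prop1}, and read the intersection point back down as a tangency with the computed invariants $d_T=s$, $k_T=s-c_T$, $c_T=d_T-k_T$. The only cosmetic difference is ordering: the paper first exhibits the unperturbed tangency directly from the folding property (since $E^s(x)\in\mathcal{C}^s$ for $x\in B$, one finds $x$ with $E^s(x)\leq T_x\mathcal{S}$ and $E^s(x)=T_xW^s(z)$) and then lifts, whereas you assert it from the graph structure after lifting, which amounts to the same thing.
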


\begin{proof}
By assumption if $x\in B$ then
$E^s(x)=\mathbb{R}^s\times\{0^{d-s}\} \in \mathcal{C}^s$. Thus, we
have that $\mathcal{S}$ has a heterodimensional tangency of
codimension $c_T$ with $W^s(z)$ for some $z\in \Lambda$. Indeed,
by definition of the folding manifold $\mathcal{S}$ and the stable
bundle $E^s$ we find $x\in \mathcal{S}\subset B$ and $z\in
\Lambda$ such that $E^s(x) \leq T_x\mathcal{S}$ and
$E^s(x)=T_xW^s(z)$. Furthermore, the signed co-index of the
tangency is $k_T=s+k-d=k-1=s-c_T>0$ and the codimension is
$d_T-k_T=s-(s-c_T)=c_T$. On the other hand, the point $(x,E^s(x))$
belongs to $\mathcal{S}^c \cap W^s(\Lambda^s)$. Moreover, from
Lemma~\ref{lem2}, we have that $\mathcal{S}^s$  contains a disc of
dimension $\dim G(s,d)$. Additionally, $W^s(\Lambda^s)$ has
codimension $\dim G(s,d)$. Hence $\mathcal{S}^s$ transversally
intersect (in a topological sense) $W^s(\Lambda^s)$.

Arguing as in Proposition~\ref{prop1}, we still have a transversal
intersection between $\mathcal{S}^s$ and $W^s(\Lambda^s_g)$ for
any $C^1$-close diffeomorphism $g$ to $f$. Thus there is $(x,E)\in
\mathcal{S}^s \cap W^s(\Lambda^s_g)$. Then $x\in \mathcal{S}\cap
W^s(\Lambda_g)$, $E\leq T_x\mathcal{S}$ and $E=T_xW^s(z)$ for some
$z\in \Lambda_g$. Similar as above, this implies that
$\mathcal{S}$ and $W^s(z)$ has a heterodimensional tangency of
codimension $c_T$ and signed co-index $k_T=s-c_T$ concluding the
proof of the proposition.
\end{proof}

\begin{proof}[Proof of Theorem~\ref{thmB}]
It suffices to consider that the folding manifold in
Proposition~\ref{pro2} is contained in the unstable manifold of a
hyperbolic fixed point of $f$ of unstable index $k$.
\end{proof}
\section{Discussion and open questions}
\label{sec5} The goal of this paper was to construct heteroclinic
tangencies which are robust under $C^1$ perturbations. This
question was proposed in~\cite[pag.~3281]{KS12} where the authors
showed the existence of $C^2$-robust heterodimensional tangencies.
To approach this problem we have constructed $C^1$-robust
tangencies where one of the hyperbolic sets involved is an
attractor. This limitation prevents that our construction could be
carried on a heterodimensional cycle. A diffeomorphism has a
\emph{heterodimensional cycle} associated with two transitive
hyperbolic sets if these sets have different  indices (dimension
of the stable bundle) and their invariant manifolds meet
cyclically. This cycle is called \textit{non-transverse
(heterodimensional) cycle} if  besides its cyclic intersections
involves some heterodimensional tangency. In order to construct a
robust non-transverse heterodimensional cycle one must construct
the tangency involving hyperbolic sets which are not attractors.
This leads to our first question:

\begin{question} Is it possible
to construct $C^1$-robust non-transverse heterodimensional cycles?
\end{question}

Bearing in mind the classic constructions of robust homoclinic
tangencies  and heterodimensional cycles (\cite{New:79,BonDia:08})
via the unfolding of tangencies and cycles associated with
saddles, we ask the following:

\begin{question} Can a diffeomorphism $f$ having a
non-transverse heterodimensional cycle associated with saddles $P$
and $Q$ be $C^r$-approximated by a diffeomorphism $g$ with a
 $C^r$-robust non-transverse heterodimensional cycle associated
 with hyperbolic sets containing the continuations
 $P_g$ and $Q_g$ of $P$ and $Q$?
\end{question}

On the other hand, we also deal in this paper with the
construction of heterodimensional tangencies with signed co-index
$k>0$ of \emph{large} codimension. Robust tangencies of large
codimension were discovered in~\cite{BR17}. Namely, the authors
provided a method to construct $C^2$-robust \emph{bundle
tangencies} which are non-trivial intersection between different
fiber bundles. Bundle tangencies include homoclinic,
heterodimensional and equidimensional tangencies.
Recently in~\cite{BR19}, using similar ideas similar to this
paper, we have constructed new examples of robust homoclinic
tangencies of large codimension. The construction also uses an
abstract notion of folding manifold with respect to a cone-field
extending previous approach on robust homoclinic tangencies
in~\cite{BD12}. However, as in the case of this work, the
construction are limited to consider high dimensional manifolds.
The lower possible dimension that allows to have a homoclinic
tangency of large codimension is $d=4$. Similarly, $d=5$ is the
lower dimension to construct a large heterodimensional tangency
with signed co-index $k>0$. Thus we address the following
questions:

\begin{question}
Is it possible to build a robust heterodimensional tangency with
signed co-index $k>0$ (resp.~homoclinic tangency) of codimension
$c_T = 2$ in dimension $d=5$ (resp.~$d=4$)?
\end{question}

\subsection*{Acknowledgements}
We are grateful to Artem Raibekas for discussions and helpful
suggestions. During the preparation of this article PB was supported  by MTM2017-87697-P from Ministerio de
Econom\'ia y Competividad de Espa\~na and CNPQ-Brasil.
SP were partially supported by CMUP (UID/MAT/00144/2019), which is funded by FCT with national (MCTES) and European structural funds through the programs FEDER, under the partnership
agreement PT2020. SP also acknowledges financial support from a postdoctoral grant of the project PTDC/MAT-CAL/3884/2014


\end{document}